\documentclass[11pt,tbtags]{amsart}
\usepackage{amssymb}
\usepackage{amsmath}
\usepackage{amsthm}
\usepackage[swedish, english]{babel}
\usepackage[latin1]{inputenc}
\usepackage{psfrag}
\usepackage{graphicx}
\usepackage[square, comma, numbers, sort&compress]{natbib}
\usepackage{subfigure}
\usepackage{nicefrac}
\usepackage{color}
\usepackage[left=3.1cm,right=3.1cm,top=3cm,bottom=3cm]{geometry}

\theoremstyle{plain}
\newcommand{\E}{\mathbb E}
\newcommand{\R}{\mathbb R}
\newcommand{\C}{\mathcal C}
\newcommand{\D}{\mathcal D}

\def\P{{\mathbb P}}

\newenvironment{remark}[1][Remark]{\begin{trivlist}
\item[\hskip \labelsep {\bf Remark}]}{\end{trivlist}}
\newtheorem{theorem}{Theorem}[section]
\newtheorem{lemma}[theorem]{Lemma}
\newtheorem{corollary}[theorem]{Corollary}
\newtheorem{proposition}[theorem]{Proposition}

\theoremstyle{definition}

\title[Optimally Stopping a BB with an Unknown Pinning Time]{Optimally Stopping a Brownian Bridge with an \\Unknown Pinning Time: A Bayesian Approach}
\author[Kristoffer Glover]{Kristoffer Glover}
\subjclass[2010]{Primary 60G40; Secondary 62F15}
\keywords{optimal stopping; Brownian bridges; parameter uncertainty; random horizon; elastic killing; bang-bang Brownian motion; local time}
\address{University of Technology Sydney, P.O. Box 123, Broadway, NSW 2007, Australia.}
\date{\today}

\begin{document}

\begin{abstract}
We consider the problem of optimally stopping a Brownian bridge with an unknown pinning time so as to maximise the value of the process upon stopping.
Adopting a Bayesian approach, we assume the stopper has a general continuous prior and is allowed to update their belief about the value of the pinning time through sequential observations of the process.
Uncertainty in the pinning time influences both the conditional dynamics of the process and the expected (random) horizon of the optimal stopping problem.
We analyse certain gamma and beta distributed priors in detail.
%Structural properties of the optimal stopping region are shown to be qualitatively different under different prior distributions, however we provide a sufficient condition for the existence of a one-sided stopping region.
Remarkably, the optimal stopping problem in the gamma case becomes \emph{time homogeneous} and is completely solvable in closed form.
Moreover, in the beta case we find that the optimal stopping boundary takes on a \emph{square-root} form, similar to the classical solution with a known pinning time.
%A two-point prior distribution is also considered in which a richer structure can emerge (with multiple optimal stopping boundaries).
%Moreover, when one of the values of the two-point prior is set to infinity (such that the process may never pin) we observe that the optimal stopping problem is once more solvable in closed form.
%Moreover, the optimal stopping strategy in this case coincides with the well-known strategy under a known pinning time, but with a lower value function due to the additional uncertainty present in the underlying dynamics.
\end{abstract}

\maketitle

\section{Introduction}
The problem of stopping a Brownian bridge so as to maximise the expected value upon stopping has a long and rich history in the field of optimal stopping.
As a continuous-time analogue of \cite{CR}, it was first considered in \cite{D} and was solved explicitly in the seminal work of Larry Shepp \cite{S}. %(and independently by \cite{Ta}).
It can also be seen as a special case of the problem studied in \cite{F}.
The problem is quite remarkable in that it is one of the very few finite-horizon optimal stopping problems to yield an explicit solution.
The same problem (amongst others) was also solved in \cite{EW} using an alternative (Markovian) approach and other notable work on this problem includes \cite{ES}, who provided yet another alternative proof of the result of \cite{S}.
Further, \cite{BCSY} extended the work of \cite{EW} to consider the problem of maximising the expected spread between the value of the process between two stopping times.
Once more the authors were able to obtain explicit solutions to this more complicated problem.

More recent work has introduced uncertainty in the pinning level of the Brownian bridge.
For example, \cite{EV2} considered the problem of stopping a Brownian bridge with an unknown pinning point so as to maximise the expected value upon stopping.
The authors allowed for a general prior distribution of the unknown pinning point and revealed a rich structure of the optimal stopping region even in the simple case of a two-point distribution.
Similar optimal trading problems were also considered in \cite{CJK} and \cite{LLL}, who used an exponential randomized Brownian bridge to model asset price dynamics under a trader's subjective market view.
Once more a rich solution structure was found by these authors, with disconnected continuation/exercise regions.

In contrast to uncertainty in the pinning level, the present paper considers uncertainty in the \emph{time} at which the Brownian bridge pins.
Such uncertainty for a Brownian bridge has recently been introduced by Bedini, Buckdahn, and Engelbert in \cite{BBE}, who considered a model of a Brownian bridge on a random time interval to model the flow of information about a company's default.
These authors outlined the basic properties of such processes and presented many useful results.
However, no optimal stopping problems were considered in \cite{BBE}.
To the best of our knowledge, this is the first paper to consider the problem of stopping a Brownian bridge with an uncertain pinning time.

When optimally stopping such a Brownian bridge, the random pinning time enters into the problem in two important, but distinct, ways.
Firstly, the drift of the process depends of the value of the unknown pinning time, and so observations of the sample path can be used to filter information about its true value.
Hence, when adopting a Bayesian approach, this updating introduces a more complicated dynamic into the optimal stopping problem at hand.
Secondly, the unknown pinning time effectively introduces a random time horizon into the optimal stopping problem, since stopping should ideally happen at or before the random pinning time.
Moreover, the random horizon introduced in this way may be considered non-standard in the sense that information about the horizon's true value is revealed through sequential observation of the underlying dynamics.
Other optimal stopping problems in the literature with a random horizon typically make the assumption that the horizon is drawn from a random variable (independent to the underlying Brownian motion) but that the horizon does not enter into the dynamics of $X$ in any way.
In such cases, no inference about the realization of the random horizon can be made before it happens.
Nor is the random horizon in our setting simply the first hitting time of some known boundary, another common mechanism used in the literature for randomly terminating a stopping problem.
In fact, we demonstrate that the random horizon in the present setting can be represented by elastic killing of the process at zero with a time-dependent killing rate (dependent on the assumed prior distribution of the optimal stopper).

%In this paper we account for the random horizon and
In this paper we consider the optimal stopping problem for an agent with a general continuous prior distribution of the unknown pinning time.
Similar to previous studies, the structural properties of the optimal stopping region are dependent on the chosen prior.
%However, we do provide a sufficient condition which ensures only a single optimal stopping boundary exists.
Certain gamma and beta distributed priors are examined in detail and various properties of the solution in these cases are presented.
Specifically, we document the remarkable property that for a particular gamma distribution the stopper's conditional estimate of the pinning time is such that the conditional dynamics of the underlying Brownian bridge becomes time homogeneous.
Moreover, the optimal stopping problem for such a prior is shown to be completely solvable in closed-form.
We also show that for a certain class of beta distributions the optimal stopping boundary is of a \emph{square-root} form, similar to the classical solution when the pinning time is known with certainty.
%Finally, we consider the case of a two-point prior distribution in which the bridge will either pin at time 1 with probability $\pi$ or at time $T>1$ with probability $1-\pi$.
%The two-point prior is also of practical interest since the case when $T=\infty$ corresponds to a prior in which the stopper believes the process to be either a standard Brownian bridge (with probability $\pi$) or a driftless Browninan motion (with probability $1-\pi$).

It is well known that the Brownian bridge appears as the large population limit of the cumulative sum process when sampling without replacement from a finite population (see \cite{R}).
As such, and as was noted in \cite{S} and \cite{EW} amongst others, the problem of maximising a stopped Brownian bridge can be thought of as a continuous analog of the following urn problem.
$2n$ balls are drawn randomly and sequentially from an urn containing an equal number ($n$) of red and black balls.
If every red ball wins you a dollar and every black ball loses you a dollar, and you could stop the game at any time, what would your optimal strategy be so as to maximise your expected profit?
The classical problem considered in \cite{S} corresponds to the situation in which the number $n$ is known.
The problem in the current article, however, corresponds to a situation where the stopper does not know $n$ with certainty, but has a prior belief about its value and updates this belief as balls are subsequently drawn.
Intuitively, if more balls of a given colour have already been drawn, then drawing an additional ball of the same colour would suggest that the total number of balls in the urn is larger than previously believed.
It is important therefore that such learning be incorporated into the optimal stopping strategy, complicating the analysis somewhat.

Brownian bridges also play a key role in many areas of statistics and probability theory and have found use in many applications across numerous fields.
In addition to its appearance in the large population limit of the cumulative sum process mentioned above, it also appears in the Kolmogorov-Smirnov test for the equality of two distributions.
In finance, they have arisen in the modelling of the so-called stock pinning effect (see \cite{AL,JIS}), the modelling of arbitrage dynamics (see \cite{BS1,LL}), and as the equilibrium price dynamics in a classical model of insider trading (see \cite{Ba,K}).

Outside of the Brownian bridge, there has also been numerous examples in the extant literature considering optimal stopping problems with incomplete information about the underlying stochastic process.
For example, optimal liquidation problems with an unknown drift were studied in \cite{EL} and \cite{EV}, and with an unknown jump intensity in \cite{Lu}.
In the context of American-style option valuation, the effect of incomplete information on optimal exercise was also considered in \cite{DMV}, \cite{EV3} and \cite{Ga}.
All of the above examples consider incomplete information about the parameters of a time-homogenous process.
Here, however, the underlying dynamics are time inhomogeneous.

The remainder of this article is structured as follows.
In Section \ref{sec:genprior} we formulate the optimal stopping problem under the assumption of a general prior distribution for the pinning time and investigate various structural properties of the solution.
The cases of a gamma and beta distributed prior are studied in further detail in Sections \ref{sec:gamma} and \ref{sec:beta}, respectively.
%Finally, the article is concluded in Section \ref{sec:inf} by considering the limiting case of the two-point prior in which one of the pinning times goes to infinity.
%Here an explicit solution for the optimal stopping boundary and value function are obtained.

\section{Problem formulation and filtering assumptions}\label{sec:genprior}

1. Let $X=(X_t)_{t\geq 0}$ be a Brownian bridge that pins to zero (without loss of generality) at some strictly positive time $\theta$.
The Brownian bridge $X$ is therefore known to solve the following stochastic differential equation
%\marginal{Also, \cite{BBE} do not actually state the SDE for the process only $X_t=B_t-\frac{t}{\theta\vee t}B_{\theta\vee t}$. Is this an issue?}
\begin{equation}\label{eqn:process}
  dX_t =-\frac{X_t}{\theta-t}dt+dB_t, \quad X_0=\kappa\in\R
\end{equation}
for $t\in[0,\theta)$ and where $B=(B_t)_{t\geq 0}$ is a standard Brownian motion defined on a probability space $(\Omega,\mathcal{F},\P)$.
We assume that $\theta$ is unknown to the optimal stopper but that they are able to glean information about its true value through continuous observation of the process $X$. % (since $\theta$ influences the dynamics of $X$).
As such, we adopt a Bayesian approach in which the stopper has some prior belief about the pinning time, denoted $\mu$, and updates this belief (via Bayes) over time.
%In other words, from the perspective of the stopper $\theta$ is a strictly positive random variable with probability distribution $\mu$.
We further assume that $\theta$ is independent of $B$ under $\P$ and that $X_t=0$ for all $t\geq\theta$.
Such random horizon Brownian bridges have recently been studied in detail by \cite{BBE}, where it was shown (in Corollary 6.1) that the completed natural filtration generated by $X$ satisfied the usual conditions.
It was also shown (in Proposition 3.1) that $\theta$ is a stopping time with respect to this filtration.
Hence we let the filtration used by the stopper be the filtration generated by $X$, denoted $\mathcal{F}^X$.
To avoid certain technical issues, in this article we will assume that the prior distribution $\mu$ has a finite first moment and admits a continuous density with $\mathrm{supp}\,\mu\subseteq[0,T]$.
Note that $T$ can be either finite or infinite and indeed we will consider specific examples of both cases in the following sections.
%We will further assume that the prior distribution $\mu$ has a finite first moment to avoid certain technical issues.
%However we will return to this issue in Section \ref{sec:twopoint} where we consider a specific example of a non-integrable pinning time $\theta$.

%\vspace{4pt}
%
%\begin{remark}
%We pause briefly to discuss the observability of the pinning time $\theta$.
%If we assume that only $X$ is observable (and $\theta$ unobservable) then observing the event $\{X_t\neq 0\}$ is sufficient to allow us to be sure that $\theta$ has not yet occurred (i.e., that $\theta>t$).
%However, if we observe $\{X_t=0\}$, then the information contained in the natural filtration of $X$ (denoted $\mathcal{F}^X_t$) may not be sufficient to determine whether the event $\{\theta\leq t\}$ has occurred or not.
%The simplest assumption is to assume that the event $\{\theta\leq t\}$ is directly observable by the optimizer.
%This can be done by augmenting the natural filtration $\mathcal{F}_t^X$ with the $\sigma$-algebra generated by $\theta$, i.e. by using $\mathcal{F}_t^{X,\theta}:=\mathcal{F}_t^X\vee\mathcal{F}_t^\theta$ where $\mathcal{F}_t^\theta:=\sigma(1_{\{\theta\leq s\}},0\leq s\leq t)$.
%Alternatively, we must work with the right-continuous filtration of $X$.%\marginal{Elaborate on this bit and say how (and why) it will not affect the solution to the optimal stopping problem!?!}\marginal{Decide which filtration to use.}
%\end{remark}

\vspace{4pt}

2. The problem under investigation is to find the optimal stopping strategy that maximises the expected value of $X$ upon stopping, i.e.
\begin{equation}\label{eqn:prob}
  V=\sup_{\tau\geq 0}\E\left[X_\tau\right],
\end{equation}
where the supremum is taken over all $\mathcal{F}^X$-stopping times.
In fact, since $X_t=0$ for all $t\geq\theta$, any stoping time $\tau$ can be replaced with $\tau\wedge\theta$ without loss of generality (since the expected payoff would be the same otherwise).
%Hence, our assumption that $\E[\theta]<\infty$ will also ensure that $\tau\wedge\theta$ is integrable.}

\vspace{4pt}

3. Next, recall that under the Bayesian approach the stopper will update their belief about the pinning time given continuous observation of the process $X$.
The details of this updating has recently been provided in \cite{BBE} which motivates the following result.

\begin{proposition}\label{prop:conditioning}
Let $q:\mathbb{R}_+\mapsto\mathbb{R}$ satisfy $\int_{0}^{\infty}|q(r)|\mu(dr)<\infty$. Then
\begin{equation}\label{eqn:project}
  \mathbb{E}\left[q(\theta)\,|\,\mathcal{F}_t^X\right]=q(\theta)\mathbf{1}_{\{t\geq\theta\}}+\int_t^{\infty}q(r)\mu_{t,X_t}(dr)\mathbf{1}_{\{t<\theta\}}
\end{equation}
for any $t>0$ and where
\begin{equation}
  \mu_{t,x}(dr):=\frac{\sqrt{\frac{r}{r-t}}e^{-\frac{r\left(x-\kappa+t\kappa/r\right)^2}{2t(r-t)}}\mu(dr)}{\int_t^{\infty}\sqrt{\frac{u}{u-t}}e^{-\frac{u\left(x-\kappa+t\kappa/u\right)^2}{2t(u-t)}}\mu(du)}.
\end{equation}
\end{proposition}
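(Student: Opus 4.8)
The plan is to compute the conditional expectation $\E[q(\theta)\,|\,\mathcal{F}_t^X]$ by decomposing on the event $\{t\geq\theta\}$ versus $\{t<\theta\}$, and on the latter event identify the conditional law of $\theta$ given $\mathcal{F}_t^X$ via a Bayes-rule/likelihood computation. On $\{t\geq\theta\}$ the pinning time has already been observed (since $\theta$ is an $\mathcal{F}^X$-stopping time by the cited Proposition~3.1 of \cite{BBE}, and the pinning is directly visible as the instant $X$ first reaches zero and stays there), so $\E[q(\theta)\,|\,\mathcal{F}_t^X]=q(\theta)$ there, giving the first term. The substance is therefore the event $\{t<\theta\}$.

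First I would fix the likelihood. Conditionally on $\{\theta=r\}$ with $r>t$, the process $(X_s)_{0\le s\le t}$ is a Brownian bridge from $\kappa$ at time $0$ to $0$ at time $r$, observed on $[0,t]$; its law on $C[0,t]$ is absolutely continuous with respect to Wiener measure started at $\kappa$, and by Girsanov applied to the drift $-X_s/(r-s)$ the Radon--Nikodym density at the path is an explicit function of $(t,X_t)$ only — this is the key Markovian reduction, and it is exactly the content that makes the bridge-pinning-time filtering tractable. Carrying out the stochastic-integral-plus-Itô computation (or simply writing down the transition density of the bridge and dividing by that of Brownian motion) yields a likelihood proportional to $\sqrt{\tfrac{r}{r-t}}\,\exp\!\bigl(-\tfrac{r(X_t-\kappa+t\kappa/r)^2}{2t(r-t)}\bigr)$ up to a factor not depending on $r$. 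I would then invoke Bayes' theorem in the form: the $\mathcal{F}_t^X$-conditional distribution of $\theta$ restricted to $\{t<\theta\}$, i.e. to $r>t$, has density with respect to $\mu$ proportional to this likelihood, and normalising over $\int_t^\infty$ produces exactly $\mu_{t,X_t}(dr)$. Multiplying by $q(r)$, integrating, and reattaching the indicator $\mathbf 1_{\{t<\theta\}}$ gives the second term; the integrability hypothesis $\int|q|\,d\mu<\infty$ guarantees everything is well defined and the interchange of integral and conditional expectation is legitimate.

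To make the Bayes step rigorous rather than formal I would verify it against a test function: for bounded $\mathcal{F}_t^X$-measurable $G$, show $\E[q(\theta)\,G\,\mathbf 1_{\{t<\theta\}}] = \E\bigl[\bigl(\int_t^\infty q(r)\mu_{t,X_t}(dr)\bigr)G\,\mathbf 1_{\{t<\theta\}}\bigr]$ by writing $G=g(X_{[0,t]})$, conditioning on $\theta=r$, using the Girsanov density to pass to Wiener measure, and then using independence of $\theta$ and $B$ under $\P$ together with Fubini to recombine the $r$-integral against $\mu(dr)$; the normalising denominator cancels correctly because it is itself $\mathcal{F}_t^X$-measurable. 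Alternatively one can simply cite \cite{BBE} for the structure of $\mathcal{F}^X$ and the disintegration, and present the density computation as the identification of the already-guaranteed conditional law.

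The main obstacle I anticipate is purely computational bookkeeping rather than conceptual: correctly deriving the likelihood ratio of the $[0,t]$-restricted bridge (from $\kappa$ to $0$ over $[0,r]$) against reference Brownian motion and simplifying the Gaussian exponent into the stated form $\tfrac{r(x-\kappa+t\kappa/r)^2}{2t(r-t)}$, together with the $\sqrt{r/(r-t)}$ prefactor. One has to be careful that the $\kappa$-dependence lands in the right place — note $x-\kappa+t\kappa/r = x-\kappa(1-t/r) = x-\kappa(r-t)/r$, which is $X_t$ minus the mean of the bridge at time $t$, so the exponent is the Gaussian density of $X_t$ around its conditional mean $\kappa(r-t)/r$ with conditional variance $t(r-t)/r$ — and that the $r$-independent Brownian normalisation cancels in the ratio defining $\mu_{t,x}$. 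Once the exponent is matched to the bridge transition density this is routine.
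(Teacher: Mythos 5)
Your proposal is correct and uses essentially the same approach as the paper: identify the conditional density of $X_t$ given $\theta=r$ and then apply Bayes to obtain the posterior law $\mu_{t,X_t}$ on $\{t<\theta\}$, with the event $\{t\geq\theta\}$ handled by the fact that $\theta$ is an $\mathcal{F}^X$-stopping time. The paper outsources the disintegration/Bayes step to Corollary 4.1 of \cite{BBE} and obtains the $\kappa\neq 0$ likelihood directly from the Gaussian bridge-conditioning formula rather than by Girsanov, but the two likelihoods differ only by an $r$-independent factor that cancels in the normalisation, so both routes yield the same $\mu_{t,x}$.
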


\begin{proof}
The proof in the case of $\kappa=0$ is given in the proof of Corollary 4.1 in \cite{BBE}.
To extend these arguments to a nonzero starting value we exploit the results of conditioning Brownian motion at time $t$ on the knowledge of its value at both an earlier and later time (cf. \cite[pp.~116-117]{E}) given by
  \[(X_t\,|\,X_s=\kappa,X_r=y)\sim\mathcal{N}\left(\frac{r-t}{r-s}\kappa+\frac{t-s}{r-s}y,\frac{(r-t)(t-s)}{r-s}\right)\]
where $s<t<r$. Therefore, setting $s=y=0$ we arrive at the desired density, which can be used in place of the $\kappa=0$ case in the proof of Corollary 4.1 in \cite{BBE}.
\end{proof}

With this result in place we can obtain the dynamics of $X$ adapted to $\mathcal{F}^X$ as follows.

\begin{proposition}\label{prop:dynamics}
For $t<\theta$, the dynamics of $X$ can be written as
\begin{equation}\label{eqn:X}
  dX_t = -X_t f(t,X_t)dt+d\bar{B}_t
\end{equation}
where $\bar{B}=(\bar{B}_t)_{t\geq 0}$ is a $\mathcal{F}^X$-Brownian motion and
  \[f(t,X_t):=\mathbb{E}\Big[\frac{1}{\theta-t}\,|\,\mathcal{F}_t^X,t<\theta\Big]\]
which can be expressed as
\begin{equation}\label{eqn:f}
  f(t,x)=\frac{\int_t^{\infty}\frac{1}{r-t}\sqrt{\frac{r}{r-t}}e^{-\frac{r\left(x-\kappa+t\kappa/r\right)^2}{2t(r-t)}}\mu(dr)}{\int_t^{\infty}\sqrt{\frac{r}{r-t}}e^{-\frac{r\left(x-\kappa+t\kappa/r\right)^2}{2t(r-t)}}\mu(dr)}=\int_{t}^{\infty}\frac{1}{r-t}\,\mu_{t,x}(dr).
\end{equation}
For $t\geq\theta$, we have $X_t=0$.
\end{proposition}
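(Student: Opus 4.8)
The plan is to realise \eqref{eqn:X} as the innovations representation of $X$ in its own filtration. Write $\mathcal G_t:=\sigma(\theta)\vee\mathcal F_t^B$ for the full-information filtration; since $\theta$ is $\mathcal G_0$-measurable and independent of $B$, the process $(B_{t\wedge\theta})_{t\ge0}$ is a $\mathcal G$-martingale, and \eqref{eqn:process} together with the absorption $X_t=0$ on $\{t\ge\theta\}$ gives
\[
  X_t=\kappa-\int_0^{t}\frac{X_s}{\theta-s}\mathbf 1_{\{s<\theta\}}\,ds+B_{t\wedge\theta},\qquad t\ge0.
\]
Because $X_s$ is $\mathcal F_s^X$-measurable and $\{s<\theta\}\in\mathcal F_s^X$ ($\theta$ being an $\mathcal F^X$-stopping time by \cite{BBE}), the $\mathcal F^X$-filtered drift on $\{s<\theta\}$ equals $\mathbb E[X_s/(\theta-s)\mid\mathcal F_s^X]=X_s\,\mathbb E[1/(\theta-s)\mid\mathcal F_s^X,s<\theta]=X_sf(s,X_s)$; the closed form \eqref{eqn:f} then follows from Proposition~\ref{prop:conditioning}, applied to the bounded truncations $q_\varepsilon(r):=((r-t)\vee\varepsilon)^{-1}$ and letting $\varepsilon\downarrow0$ by conditional monotone convergence (since $r\mapsto1/(r-t)$ itself need not be $\mu$-integrable near $r=t$), after rewriting the ratio of integrals through the density of $\mu_{t,x}$.

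I would then set $\bar B_t:=X_t-\kappa+\int_0^tX_sf(s,X_s)\,ds$ (the integrand vanishing on $\{s\ge\theta\}$ since $X_s=0$ there) and identify $\bar B$, up to time $\theta$, by Lévy's characterisation. Continuity is immediate. For the martingale property, fix $s<t$: conditioning first on $\mathcal G_s\supseteq\mathcal F_s^X$ gives $\mathbb E[B_{t\wedge\theta}-B_{s\wedge\theta}\mid\mathcal F_s^X]=0$, while by Fubini and the tower property
\[
  \mathbb E\Big[\int_s^{t}\Big(X_uf(u,X_u)-\tfrac{X_u}{\theta-u}\Big)\mathbf 1_{\{u<\theta\}}\,du\,\Big|\,\mathcal F_s^X\Big]=\int_s^{t}\mathbb E\big[\mathbb E[X_uf(u,X_u)-\tfrac{X_u}{\theta-u}\mid\mathcal F_u^X]\mathbf 1_{\{u<\theta\}}\mid\mathcal F_s^X\big]\,du=0,
\]
so $\bar B$ is an $\mathcal F^X$-martingale; and since $\int_0^{\cdot}X_sf(s,X_s)\,ds$ has finite variation, $\langle\bar B\rangle_t=\langle X\rangle_t=t\wedge\theta$. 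Hence $(\bar B_{t\wedge\theta})_{t\ge0}$ is an $\mathcal F^X$-Brownian motion stopped at $\theta$; enlarging the probability space by an independent Brownian motion to run after $\theta$ then yields a genuine $\mathcal F^X$-Brownian motion $\bar B$ for which \eqref{eqn:X} holds on $\{t<\theta\}$, which is all that is asserted ($X_t=0$ for $t\ge\theta$ being the standing assumption).

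The delicate points are the finiteness of $f(t,x)$ and of $\int_0^tX_sf(s,X_s)\,ds$, together with the Fubini exchange above. Although $\mu_{t,x}$ charges a right-neighbourhood of $r=t$ on which $1/(r-t)$ is singular, the Gaussian factor in $\mu_{t,x}$ behaves like $\exp(-x^2/(2(r-t)))$ as $r\downarrow t$, which dominates any negative power of $(r-t)$ provided $x\ne0$, so $f(t,x)<\infty$ for $x\ne0$; and the occupation measure of the zero set $\{s:X_s=0\}$ vanishes (on $[0,\theta)$ the martingale part of $X$ is $B$, so this follows from the occupation-times formula), so the drift in \eqref{eqn:X} is well defined $ds$-a.e. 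Finally $\mathbb E\int_0^{t\wedge\theta}|X_s|f(s,X_s)\,ds=\mathbb E\int_0^{t\wedge\theta}|X_s|/(\theta-s)\,ds<\infty$ by the corresponding integrability property of the Brownian bridge, which also legitimises the interchange of expectation and integral. I expect the behaviour of $f$ near the zero set of $X$ — and hence the $ds$-a.e.\ sense in which \eqref{eqn:X} is to be read — to be the only genuinely subtle issue.
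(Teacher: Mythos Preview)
Your proof is correct and follows essentially the same innovations-process route as the paper: define $\bar B$ by subtracting the $\mathcal F^X$-projected drift from $X$, verify the martingale property via the tower trick $\mathbb E[\,\cdot\mid\mathcal F_s^X]=\mathbb E[\mathbb E[\,\cdot\mid\mathcal F_u^X]\mid\mathcal F_s^X]$ so that the correction term vanishes, read off the quadratic variation, and invoke L\'evy; for the formula \eqref{eqn:f} both you and the paper apply Proposition~\ref{prop:conditioning} to a truncated version of $r\mapsto 1/(r-t)$ and pass to the limit. Your treatment is in fact slightly more careful than the paper's in two respects: you note that $\langle\bar B\rangle_t=t\wedge\theta$ (not $t$) and explicitly extend past $\theta$, and you address why $f(t,x)<\infty$ for $x\neq0$ together with the $ds$-a.e.\ sense in which the drift is defined.
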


\begin{proof}
Given the dynamics in \eqref{eqn:process} we define $Z_t:=\mathbb{E}\bigl[\frac{1}{\theta-t}\,|\,\mathcal{F}_t^X\bigr]$ and the process
  \[\bar{B}_t:=\int_0^t\left(Z_s-\frac{1}{\theta-s}\right)X_s \,ds+B_t,\]
for $t\in[0,\theta)$.
To show that $\bar{B}$ is also an $\mathcal{F}^X$-Brownian motion we have the following arguments
\begin{align}
\bar{B}_{t+s} &= \bar{B}_t+\int_{t}^{t+s}\left(Z_u-\frac{1}{\theta-u}\right)X_u \,du+\int_{t}^{t+s}dB_u \nonumber \\
\Rightarrow \E\left[\bar{B}_{t+s}\,|\mathcal{F}_{t}^{X}\right] &= \bar{B}_t+\E\left[\int_t^{t+s}\left(Z_u-\frac{1}{\theta-u}\right)X_u \, du\,|\,\mathcal{F}_{t}^{X}\right]+\E\left[B_{t+s}-B_t\,|\,\mathcal{F}_t^X\right]\nonumber \\
&= \bar{B}_t+\int_t^{t+s}\E\left[\E\left[\left(Z_u-\frac{1}{\theta-u}\right)X_u\,|\,\mathcal{F}_{u}^{X}\right]\,|\,\mathcal{F}_{t}^{X}\right]du \nonumber \\
&= \bar{B}_t+\int_t^{t+s}E\left[Z_u X_u-X_u\E\left[\frac{1}{\theta-u}\,|\,\mathcal{F}_{u}^{X}\right]\,|\,\mathcal{F}_{t}^{X}\right]du \nonumber \\
&= \bar{B}_t, \nonumber
\end{align}
and clearly $\langle\bar{B}\rangle_t=t$.
Hence we have
  \[dX_t = -X_t Z_tdt+d\bar{B}_t=-X_t f(t,X_t)dt+d\bar{B}_t\]
for $t<\theta$. %\marginal{\textbf{Does $Z$ need to be an integrable random variable to make these arguments as we see below that $Z$ is not integral in most cases.}}
Next, to determine the required expression for $f(t,x)$ we would like to use the results of Proposition \ref{prop:conditioning} after setting $q(r)=1/(r-t)$.
Unfortunately, for this particular choice of $q$ the required integrability condition $\int_{0}^{\infty}|q(r)|\mu(dr)<\infty$ is not satisfied in general, nor for our specific choices of $\mu$ made below.
However, it can be shown that \eqref{eqn:project} is still valid for $q(r)=1/(r-t)$ by applying Proposition \ref{prop:conditioning} to a truncated version of this function and then passing to the limit.
Specifically, let $q_\epsilon(r)=\frac{1}{r-t}\mathbf{1}_{\{|r-t|>\epsilon\}}$ for some $\epsilon>0$ and assume that $\mu$ is such that $\int_{0}^{\infty}|q_{\epsilon}(r)|\mu(dr)<\infty$.
We can apply Proposition \ref{prop:conditioning} to $q_{\epsilon}$ and take the limit as $\epsilon\downarrow 0$ to yield
\begin{align}
  \mathbb{E}\Big[\frac{1}{\theta-t}\,|\,\mathcal{F}_t^X\Big]=\lim_{\epsilon\downarrow 0}\mathbb{E}\left[q_{\epsilon}(\theta)\,|\,\mathcal{F}_t^X\right] &= \lim_{\epsilon\downarrow 0}q_{\epsilon}(\theta)\mathbf{1}_{\{t\geq\theta\}}+\int_t^{\infty}\lim_{\epsilon\downarrow 0}q_{\epsilon}(r)\mu_{t,X_t}(dr)\mathbf{1}_{\{t<\theta\}} \nonumber \\
  &= \frac{1}{\theta-t}\mathbf{1}_{\{t\geq\theta\}}+\int_t^{\infty}\frac{1}{r-t}\mu_{t,X_t}(dr)\mathbf{1}_{\{t<\theta\}} \nonumber
\end{align}
obtaining the desired expression for $f(t,x)$ and completing the proof.%\marginal{\textbf{Hardy: Is it dominated convergence we have used here? If so how should we write this?}}
\end{proof}
%\marginal{\textbf{At this point \cite{EV2} assume that the variance of the posterior distribution is bounded on $[0,t_0]\times\R$ and provide a result that the function $h(t,z)$ is Lipschitz in $z$, uniformly in $t$. Are similar results needed here? We will find that $-xf(t,x)$ is not continuous (and hence not Lipschitz). Here, we also have the issue that the horizon might not be equal to 1.}}

4. It is clear from its definition that $f\geq 0$ (since $\theta\geq t$) and it can also be shown to have the following property.

\begin{proposition}\label{prop:fdecrease}
Given $f$ as defined in \eqref{eqn:f} we have that $x\mapsto f(t,x)$ is increasing for $x<0$ and decreasing for $x>0$ for any $t>0$.
\end{proposition}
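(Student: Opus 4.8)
The plan is to exploit the structure of $f(t,\cdot)$ as a ratio of integrals against a one-parameter exponential family in the variable $x^2$. First I would simplify the Gaussian exponent in \eqref{eqn:f}: writing $x-\kappa+t\kappa/r = x - \kappa(r-t)/r$ and expanding the square gives
\[
\frac{r\,(x-\kappa+t\kappa/r)^2}{2t(r-t)} \;=\; \frac{r\,x^2}{2t(r-t)} \;-\; \frac{\kappa x}{t} \;+\; \frac{\kappa^2(r-t)}{2tr},
\]
whose middle term is independent of $r$, so that the common factor $e^{\kappa x/t}$ cancels between numerator and denominator. Setting $a_r := \frac{r}{2t(r-t)} > 0$ and $\lambda(dr) := \sqrt{\tfrac{r}{r-t}}\,e^{-\kappa^2(r-t)/(2tr)}\,\mu(dr)$, this exhibits $f$ as a function of $x^2$ alone,
\[
f(t,x) \;=\; \frac{\int_t^{\infty}\frac{1}{r-t}\,e^{-a_r x^2}\,\lambda(dr)}{\int_t^{\infty}e^{-a_r x^2}\,\lambda(dr)} \;=:\; g(t,x^2).
\]
In particular $f(t,\cdot)$ is even, so it suffices to show that $y\mapsto g(t,y)$ is non-increasing on $[0,\infty)$.

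Next I would recognise $g(t,y)$ as the expectation of the non-increasing function $h(r):=1/(r-t)$ under the probability measure $\nu_y(dr) \propto e^{-a_r y}\lambda(dr)$ on $(t,\infty)$, and observe the crucial algebraic identity $a_r = \frac{1}{2t} + \frac{1}{2(r-t)} = \frac{1}{2t} + \frac{1}{2}h(r)$, so that $r\mapsto a_r$ is itself non-increasing. For $y_2 > y_1 \geq 0$ I would then write
\[
g(t,y_2) \;=\; \frac{\int_t^{\infty} h(r)\, e^{-a_r(y_2-y_1)}\, e^{-a_r y_1}\lambda(dr)}{\int_t^{\infty} e^{-a_r(y_2-y_1)}\, e^{-a_r y_1}\lambda(dr)} \;=\; \frac{\E_{\nu_{y_1}}\!\big[\,h(r)\,e^{-a_r(y_2-y_1)}\,\big]}{\E_{\nu_{y_1}}\!\big[\,e^{-a_r(y_2-y_1)}\,\big]},
\]
and apply the Chebyshev correlation inequality: since $r\mapsto h(r)$ is non-increasing while $r\mapsto e^{-a_r(y_2-y_1)}$ is non-decreasing (as $a_r$ is non-increasing and $y_2-y_1>0$), integrating the pointwise-nonpositive kernel $\big(h(r)-h(s)\big)\big(e^{-a_r(y_2-y_1)}-e^{-a_s(y_2-y_1)}\big)$ against $\nu_{y_1}\otimes\nu_{y_1}$ gives $\E_{\nu_{y_1}}[h(r)\,e^{-a_r(y_2-y_1)}] \leq \E_{\nu_{y_1}}[h(r)]\,\E_{\nu_{y_1}}[e^{-a_r(y_2-y_1)}]$. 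Dividing, $g(t,y_2) \leq \E_{\nu_{y_1}}[h(r)] = g(t,y_1)$. Together with the evenness of $f(t,\cdot)$ this yields the claim, and the inequality is strict whenever $h$ is non-constant on $\mathrm{supp}\,\mu$, which holds since $\mu$ has a continuous density.

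The one point requiring care is integrability near $r=t$, where $h(r)=1/(r-t)$ need not be $\mu$-integrable --- the same issue that forced the truncation argument in the proof of Proposition \ref{prop:dynamics}. I would handle it by noting that the statement is vacuous when $g(t,y_1)=+\infty$, and that otherwise the bounds $h\geq 0$ and $0\leq e^{-a_r(y_2-y_1)}\leq 1$ make $h(r)\,e^{-a_r(y_2-y_1)}$ and $h(r)$ simultaneously $\nu_{y_1}$-integrable, so the correlation inequality applies with no extra hypotheses; the normalising integrals $\int_t^\infty e^{-a_r y}\lambda(dr)$ are finite and strictly positive because $\mu$ has finite first moment and $\sqrt{r/(r-t)}$ is locally integrable at $t$. (Alternatively, where $f$ is smooth in $x$ one can differentiate directly and identify $\partial_y g(t,y) = -\mathrm{Cov}_{\nu_y}(h,a) = -\tfrac12\,\mathrm{Var}_{\nu_y}(h)\leq 0$, at the cost of a dominated-convergence justification for differentiating under the integral.)
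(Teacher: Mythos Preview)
Your argument is correct. The paper proves this by differentiating $f$ directly in $x$: after a short computation it obtains
\[
\partial_x f(t,x)\;=\;-\frac{x}{t}\Big(\E\big[\tfrac{\theta}{(\theta-t)^2}\,\big|\,\mathcal{F}_t^X\big]-\E\big[\tfrac{1}{\theta-t}\,\big|\,\mathcal{F}_t^X\big]\E\big[\tfrac{\theta}{\theta-t}\,\big|\,\mathcal{F}_t^X\big]\Big),
\]
and then rewrites the bracketed term as a positive multiple of $\mathrm{Var}\!\big(\tfrac{1}{\theta-t}\,\big|\,\mathcal{F}_t^X\big)$, so that $\mathrm{sgn}\,\partial_x f(t,x)=\mathrm{sgn}(-x)$. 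Your parenthetical alternative at the end is exactly this computation, phrased in the $y=x^2$ variable.

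Your main route is genuinely different in presentation: you first isolate the evenness of $f(t,\cdot)$ by expanding the Gaussian exponent (which the paper never does explicitly), and then prove monotonicity of $g(t,y)$ via Chebyshev's correlation inequality applied to the two comonotone functions $h(r)=1/(r-t)$ and $a_r=\tfrac{1}{2t}+\tfrac{1}{2}h(r)$. This buys you two things the paper's proof glosses over: you avoid having to justify differentiation under the integral sign, and your integrability discussion handles cleanly the case $f(t,0)=+\infty$ that the paper only addresses informally later. Conversely, the paper's direct derivative computation is shorter and immediately yields the exact identity $\partial_x f = -(x\text{-multiple})\cdot\mathrm{Var}$, which is slightly more informative than the bare inequality. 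Both arguments rest on the same underlying fact---the affine relation $a_r=\tfrac{1}{2t}+\tfrac{1}{2}h(r)$ turning a covariance into a variance---so the difference is one of packaging rather than substance.
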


\begin{proof}
Straightforward differentiation of \eqref{eqn:f} yields
\begin{align}
  \frac{\partial f(t,x)}{\partial x} &= -\frac{x}{t}\left(\int_t^\infty\frac{r}{(r-t)^2}\mu_{t,x}(dr)-\int_t^\infty\frac{1}{r-t}\mu_{t,x}(dr)\int_t^\infty\frac{r}{r-t}\mu_{t,x}(dr)\right) \nonumber \\
  &= -\frac{x}{t}\left(\E\Bigl[\frac{\theta}{(\theta-t)^2}\,|\,\mathcal{F}_t^X\Bigr]-\E\Bigl[\frac{1}{\theta-t}\,|\,\mathcal{F}_t^X\Bigr]\E\Bigl[\frac{\theta}{\theta-t}\,|\,\mathcal{F}_t^X\Bigr]\right). \nonumber \\
  &= -\frac{x}{t^2}\textrm{Var}\Bigl(\frac{1}{\theta-t}\,|\,\mathcal{F}_t^X\Bigr) \nonumber
\end{align}
where the last equality is a consequence of noting that $\tfrac{\theta}{\theta-t}=\tfrac{1}{t}+\tfrac{1}{t(\theta-t)}$. We thus observe that $\textrm{Sign}\left(f_x(t,x)\right)=\textrm{Sign}(-x)$, completing the proof.
\end{proof}

\begin{remark}
Proposition \ref{prop:fdecrease} demonstrates the intuitive result that a movement of $X$ away from zero gives information that the process is more likely to pin at a \emph{later} time, i.e.~that $\theta$ is larger and hence $1/(\theta-t)$ is smaller.
In other words, learning about the unknown pinning time produces a decreased pinning force as $X$ moves away from zero.
However, the $-X_t$ term in the drift of \eqref{eqn:X} will result in an increased pinning force as $X$ deviates from zero.
The overall affect of these two competing contributions to the drift is not clear in general and indeed different priors can result in different behaviour of the function $-xf(t,x)$ and hence the optimal stopping strategy in \eqref{eqn:prob}.
\end{remark}

\vspace{4pt}

We also observe the following general properties of the function $f$, which will be seen in the specific examples considered later.
Firstly, Proposition \ref{prop:fdecrease} implies that the drift of the SDE in \eqref{eqn:X} satisfies the (sublinear) growth condition $\|xf(t,x)\|\leq k(1+\|x\|)$ for some positive constant $k$.
Therefore it is known that \eqref{eqn:X} admits a weak solution (cf. Proposition 3.6 in \cite[][p.~303]{KS}).
Secondly, since $(r-t)^{-3/2}$ is not integrable at $r=t$ but $(r-t)^{-1/2}$ is, it can be seen that $f(t,0)=\infty$ at time points for which $\mu$ has a strictly positive density.
Otherwise, $f(t,0)<\infty$.
Thirdly, in the cases where $f(t,0)=\infty$, the drift function $-xf(t,x)$ will be seen to have a non-zero (but finite) limit as $|x|\to 0$.
This limit appears difficult to analyse in general from \eqref{eqn:f} but can be seen clearly in the examples considered in Sections \ref{sec:gamma} and \ref{sec:beta}.
A consequence of this non-zero limit is that a discontinuity appears in $-xf(t,x)$ at $x=0$ (since $x\mapsto f(t,x)$ is even, hence $x\mapsto -xf(t,x)$ odd).

\vspace{4pt}

5. Next, while \eqref{eqn:X} describes the conditional dynamics of the process up to the random pinning time $\theta$, it does not tell us when this time will actually occur.
This additional randomness must be incorporated into the expectation taken in \eqref{eqn:prob} and to do so we assume from now on (without loss of generality) that the process $X$ is killed at the random time $\theta$ (and sent to some cemetery state if desired).
Since $X_{\theta}=0$, the process must be at zero when killed, however the process can visit zero many times before being killed there.
Hence the killing at zero is \emph{elastic}, in the sense that killing happens only at some `rate'.
When $\mu$ admits a continuous density, this rate is given by the following result.

\begin{proposition}\label{lem:killing}
If the distribution of $\theta$ admits a continuous density function with respect to Lebesgue, denoted by $\mu(\cdot)$ and with $\mathrm{supp}\,\mu\subseteq[0,T]$, then the \emph{infinitesimal killing rate} for the process $X$ is given by
\begin{equation}\label{eqn:kill}
  c(t,x)=q(t)\delta_0(x) \,\,\textrm{ where }\,\, q(t):=\frac{\mu(t)}{\frac{1}{\sqrt{2\pi t}}\int_{t}^{T}\sqrt{\frac{r}{r-t}}\,\mu(r)dr}
\end{equation}
and $\delta_0(x)$ denotes the Dirac delta function of $x$ at zero.
\end{proposition}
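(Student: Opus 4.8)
The plan is to make the phrase ``infinitesimal killing rate $c(t,x)=q(t)\delta_0(x)$'' precise through the predictable compensator of the killing indicator, and then to identify it explicitly. Precisely, saying that $X$ is killed at $0$ at rate $q(t)$ means that the $\mathcal F^X$-dual predictable projection of $\mathbf 1_{\{\theta\le\cdot\}}$ equals $\Lambda_t=\int_0^{t\wedge\theta}q(s)\,dL_s$, where $L=(L_t)_{t\ge0}$ is the semimartingale local time of $X$ at level $0$; this is the pathwise content of elastic killing, the local time being the intrinsic clock along which the killing accrues. Since $\int_0^{\cdot\wedge\theta}q(s)\,dL_s$ is continuous, adapted and increasing, hence predictable, uniqueness in the Doob--Meyer decomposition reduces the claim to showing that
\[
M_t:=\mathbf 1_{\{\theta\le t\}}-\int_0^{t\wedge\theta}q(s)\,dL_s
\]
is an $\mathcal F^X$-martingale for $q$ as in \eqref{eqn:kill}, i.e.\ that $\E[M_t-M_s\mid\mathcal F_s^X]=0$ for $s<t$.

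Both conditional expectations vanish on $\{s\ge\theta\}$, so I fix $s<t$ and argue on $\{s<\theta\}$. For the first term, Proposition~\ref{prop:conditioning} applied to the bounded function $\mathbf 1_{(s,t]}$ gives $\E[\mathbf 1_{\{s<\theta\le t\}}\mid\mathcal F_s^X]=\int_s^t\mu_{s,X_s}(dr)$. For the second, I would use the occupation-density approximation $dL_u=\lim_{\ep\downarrow0}\tfrac1{2\ep}\mathbf 1_{\{|X_u|\le\ep\}}\,du$ together with Fubini and the Markov property of $X$ to write it as $\lim_{\ep\downarrow0}\tfrac1{2\ep}\int_s^t q(u)\,\P(u<\theta,\,|X_u|\le\ep\mid\mathcal F_s^X)\,du=\int_s^t q(u)\,p^X_{s\to u}(X_s,0)\,du$, where $p^X_{s\to u}(x,y)$ denotes the transition sub-density of $X$ on the event $\{u<\theta\}$. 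The interchange of limit and integral near $u=s$ (where $p^X_{s\to u}(x,0)$ blows up like $(u-s)^{-1/2}$ when $x=0$, which is still integrable in $u$) is the one genuinely analytic point, and it can be handled by the same truncation device used in the proof of Proposition~\ref{prop:dynamics}. Thus the martingale property of $M$ is equivalent to the pointwise relation
\[
\bigl(\text{density of }\mu_{s,x}\text{ at }u\bigr)=q(u)\,p^X_{s\to u}(x,0)\qquad\text{for a.e.\ }u\in(s,t).
\]

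To prove this relation I would decompose $X$ after time $s$, conditionally on $\mathcal F_s^X\cap\{s<\theta\}$, as a mixture over $r\sim\mu_{s,X_s}$ of Brownian bridges pinning to $0$ at time $r$, so that $p^X_{s\to u}(x,0)=\int_u^\infty p^{(r)}_{s\to u}(x,0)\,\mu_{s,x}(dr)$, with $p^{(r)}$ the relevant bridge transition density. The crucial algebraic input is the bridge-splitting identity
\[
p^{(r)}_{s\to u}(x,0)\,p^{(r)}_{0\to s}(\kappa,x)=p^{(r)}_{0\to u}(\kappa,0)\,p^{(u)}_{0\to s}(\kappa,x),\qquad s<u<r,
\]
i.e.\ that a bridge pinning at $r$, observed at the intermediate time $s$ given its values at $0$ and $u$, is simply a bridge pinning at $u$ --- an immediate consequence of the conditioning formula for Brownian motion recalled in the proof of Proposition~\ref{prop:conditioning}. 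Writing $\mu_{s,x}(dr)=p^{(r)}_{0\to s}(\kappa,x)\,\mu(dr)/p(s,x)$ (the content of Proposition~\ref{prop:conditioning}, with $p(s,\cdot)$ the marginal sub-density of $X_s$ on $\{s<\theta\}$, equal to the denominator there up to the factor $\sqrt{2\pi s}$), substituting the identity and integrating out $r$ collapses the mixture to $p^X_{s\to u}(x,0)=p^{(u)}_{0\to s}(\kappa,x)\,p(u,0)/p(s,x)$, whereas the density of $\mu_{s,x}$ at $u$ is $p^{(u)}_{0\to s}(\kappa,x)\,\mu(u)/p(s,x)$. Cancelling the common factors forces $q(u)\,p(u,0)=\mu(u)$, that is $q(t)=\mu(t)/p(t,0)$, which is precisely \eqref{eqn:kill} once one recalls $p(t,0)=\tfrac1{\sqrt{2\pi t}}\int_t^T\sqrt{r/(r-t)}\,\mu(r)\,dr$; it is simply the hazard rate of $\theta$ at its pinning location --- the prior density $\mu(t)$ renormalised by the density of finding the still-alive process at $0$ at time $t$.

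The main obstacle is conceptual rather than computational: one has to commit to the right rigorous meaning of ``killing rate'' in this time-inhomogeneous, observation-driven model --- the textbook elastic-killing calculus presumes a time-homogeneous Markov process with a prescribed un-killed law, whereas here the un-killed dynamics \eqref{eqn:X} are themselves produced by the $\mathcal F^X$-filtering --- and the compensator formulation above is what renders the assertion unambiguous. Granting that, the only real work is the bridge-splitting identity in the third step, with the local-time limit-interchange near $u=s$ the only point requiring technical care.
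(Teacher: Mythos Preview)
Your proposal is correct and in fact more self-contained than the paper's proof. The paper does not derive the compensator formula at all: it invokes Theorem~3.2 of \cite{BBE2}, which states directly that the $\mathcal{F}^X$-compensator of $\mathbf{1}_{\{\theta\le t\}}$ is $K_t=\int_0^{t\wedge\theta}q(s)\,d\ell_s^0(X)$ with $q$ as in \eqref{eqn:kill}, and then spends a few lines passing from this to the infinitesimal form $c(t,x)=q(t)\delta_0(x)$ via $\P(t<\theta\le t+\epsilon\mid\mathcal F_t^X)=\E\int_0^{\epsilon\wedge(\theta-t)}q(t+u)\,d\ell_u^0(X)$ and the formal identity $d\ell_u^0(X)=\delta_0(X_{t+u})\,du$. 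By contrast, you re-derive the compensator itself: the bridge-splitting identity $p^{(r)}_{s\to u}(x,0)\,p^{(r)}_{0\to s}(\kappa,x)=p^{(r)}_{0\to u}(\kappa,0)\,p^{(u)}_{0\to s}(\kappa,x)$ collapses the $r$-integral and yields $q(u)=\mu(u)/p(u,0)$ as a genuine hazard-rate identity, which is exactly \eqref{eqn:kill}. So your route is longer but illuminates \emph{why} $q$ has that form, whereas the paper buys brevity by citing \cite{BBE2}; your version also sidesteps the somewhat informal $\delta_0$ limit, since you work directly at the level of the predictable compensator. The one point to be careful about, as you note, is the local-time limit interchange near $u=s$; the paper avoids this by taking the compensator as given.
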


\begin{proof}
Identifying $\theta$ as the (random) lifetime of the process $X$ it is well known (see, for example, \cite[][p.~130]{La}) that the infinitesimal killing rate is given by%\marginal{Another reference is \cite[][p.~18]{BS}.}
\begin{equation}\label{eqn:killdef}
  c(t,x)=\lim_{\epsilon\downarrow 0}\left\{\frac{1}{\epsilon}\,\P\left(t<\theta\leq t+\epsilon\,|\,\mathcal{F}_t^X\right)\right\}.
\end{equation}
To obtain the probability required above we note that, under the assumption that $\theta$ has a continuous density with respect to Lebesgue, Theorem 3.2 in \cite{BBE2} states that the compensator of the indicator process $\mathbf{1}_{\{\theta\leq t\}}$ admits the representation
\begin{equation}\label{eqn:compK}
  K_t=\int_0^{t\wedge\theta}q(s)d\ell_s^0(X),
\end{equation}
where $q$ is as defined in \eqref{eqn:kill} and $\ell_s^0(X)$ denotes the local time at zero of the process $X$ up to time $s$.
It thus follows that
\begin{align}
  \P(t<\theta\leq t+\epsilon\,|\,\mathcal{F}_t^X) &= \E[\mathbf{1}_{\{\theta\leq t+\epsilon\}}-\mathbf{1}_{\{\theta\leq t\}}\,|\,\mathcal{F}_t^X] \nonumber \\
  &= \E[K_{t+\epsilon}-K_t\,|\,\mathcal{F}_t^X] \nonumber \\
  &= \E\int_{t\wedge\theta}^{(t+\epsilon)\wedge\theta}q(s)d\ell_s^0(X) \nonumber \\
  &= \E\int_{0}^{\epsilon\wedge(\theta-t)}q(t+u)d\ell_u^0(X). \label{eqn:probpin}
\end{align}
Using the identity \eqref{eqn:probpin} in \eqref{eqn:killdef} therefore yields
\begin{align}
  c(t,x) &= \lim_{\epsilon\downarrow 0}\left\{\frac{1}{\epsilon}\,\E\int_{0}^{\epsilon\wedge(\theta-t)}q(t+u)d\ell_u^0(X)\right\} \nonumber \\
  &= \lim_{\epsilon\downarrow 0}\left\{\frac{1}{\epsilon}\,\E\int_{0}^{\epsilon\wedge(\theta-t)}q(t+u)\delta_0(X_{t+u})du\right\} \nonumber \\
  &= q(t)\delta_0(x), \nonumber
\end{align}
and the claim is proved.%\marginal{The third equality used the (informal) view of local time that $d\ell_s^x(X)=\delta_x(X_s)ds$ (cf. \cite[][Propsotion 2.4]{B})}
\end{proof}

\vspace{4pt}

6. To solve the optimal stopping problem in \eqref{eqn:prob} we embed it into a Markovian framework where the process $X$ starts at time $t$ with value $x$.
Trivially, if $t\geq\theta$ then the optimal stopping problem has a zero value.
Therefore, in what follows we assume that $t<\theta$ and we formulate the embedded problem as
\begin{equation}\label{eqn:prob2}
  V(t,x)=\sup_{0\leq\tau\leq T-t}\mathbb{E}\big[X_{t+\tau}^{t,x}\textbf{1}_{\{t+\tau<\theta\}}\big]
\end{equation}
where the process $X=X^{t,x}$ is defined by
\begin{eqnarray}
  \left\{\begin{array}{ll}\label{eqn:Xembed}
  dX_{t+s}=-X_{t+s}f(t+s,X_{t+s})ds+d\bar{B}_{t+s}, & 0\leq s<\theta-t,\\
  X_t=x, & x\in\mathbb{R},\\
\end{array}\right.
\end{eqnarray}
and where $X$ is killed elastically at zero according to \eqref{eqn:kill}.
Recall that $T$ denotes the upper limit of the support of $\mu$ and hence the set of admissible stopping times in \eqref{eqn:prob2} is fixed (i.e.,~independent of $\theta$).
However, the indicator function ensures that if stopping occurs after $\theta$ then the payoff is zero.

\vspace{4pt}

\begin{remark}
Noting the killing rate derived in \eqref{eqn:kill}, and comparing \cite[][Section III.18]{RW}, the objective function in \eqref{eqn:prob2} can alternatively be expressed as
\begin{equation}\label{eqn:local}
  \E\big[X_{t+\tau}^{t,x}\textbf{1}_{\{t+\tau<\theta\}}\big]=\E\Big[X_{t+\tau}^{t,x}\E\big[\textbf{1}_{\{t+\tau<\theta\}}\,|\,\mathcal{F}_{t+\tau}^X\big]\Big]=\E\Big[e^{-\int_{0}^{\tau}q(t+u)d\ell_u^0(X)}X_{t+\tau}^{t,x}\Big].
\end{equation}
This formulation reveals that the value received upon stopping ($X$) is effectively discounted over time at a rate proportional to the local time of $X$ spent at zero.
The appearance of local time in this way also indicates that we should expect a jump in the $x$-derivative of the value function across zero (cf. \cite[][p.~123]{BS}).
However, we will delay further discussion of this feature to the specific examples considered in Sections \ref{sec:gamma} and \ref{sec:beta}.% when we consider specific examples of a continuous prior.
\end{remark}
%If the prior is not continuous then we expect the killing rate to be more involved and we defer this discussion to Section \ref{sec:twopoint} when considering the specific example of a two-point prior.}

\vspace{4pt}

7. From \eqref{eqn:prob2} it is evident that $V(t,x)\geq x$ for all $(t,x)\in[0,\infty)\times\R$.
As such, we define the continuation region $\C:=\{(t,x)\in[0,\infty)\times\R\,|\,V(t,x)>x\}$ and the stopping region $\D:=\{(t,x)\in[0,\infty)\times\R\,|\,V(t,x)=x\}$.
%The first entry time to the stopping region is thus defined as $\tau_{\D}:=\inf\{s\geq 0\,|\,(t+s,X_{t+s}^{t,x})\in\D\}$.
These regions are of importance in the general theory of optimal stopping (see \cite{PS}) and the structure of the optimal stopping region $\D$ will in general depend on the chosen prior $\mu$.
%For example, it can take the form of a simple one-sided boundary in the case of certain gamma and beta distributed priors (Sections \ref{sec:gamma} and \ref{sec:beta}) but can have a disconnected stopping region in the case of a simple two-point prior (Section \ref{sec:twopoint}).
%We will consider each of these cases in more detail below,
However, some general properties of the optimal stopping and continuation regions are discussed below.

From \eqref{eqn:Xembed} and an application of the optional sampling theorem we have that for any given stopping time $\tau$
\begin{equation}\label{eqn:nostopneg}
  \E\big[X_{t+\tau}^{t,x}\textbf{1}_{\{t+\tau<\theta\}}\big]=x-\E\int_0^{\tau\wedge(\theta-t)}X_{t+s}^{t,x}f(t+s,X_{t+s}^{t,x})ds.
\end{equation}
We note from \eqref{eqn:nostopneg} that, since $f\geq 0$, it will not be optimal to stop when $x$ is negative, hence $\{(t,x)\in[0,\infty)\times\R\,|\,x<0\}\in\C$.
In other words, since it is known that the process will eventually pin (yielding a payoff of zero), it would not be optimal to stop and receive a negative payoff before this time.
Given this fact, it is therefore evident that if a single optimal stopping boundary were to exist it could not be of the form $\C=\{(t,x)\in[0,\infty)\times\R\,|\,x>b(t)\}$, i.e.~a single lower boundary.

In the specific examples considered in Sections \ref{sec:gamma} and \ref{sec:beta} we find (and verify) that the optimal stopping strategy takes on the form of a one-sided \emph{upper} boundary.
However, informal (numerical) investigation of other priors indicate the possible existence of multiple stopping boundaries and disjoint continuation regions.
One would ideally like to establish conditions under which a one-sided stopping region can be expected.
However, such results appear difficult to obtain in the present setting, since standard methods are complicated by the dual effect of the prior on the conditional dynamics and the random time horizon of the optimal stopping problem.
Such structural questions about the optimal stopping region must therefore be tackled on a case-by-case basis.

\vspace{4pt}

%Furthermore, computation of the function $f$ in \eqref{eqn:f} is not possible in closed form for a general prior distribution.
%Therefore, the ability to check the condition for a one-sided stopping region is difficult in general.
%However, in the following three sections we will consider three specific priors for which $f$ can be computed in closed form and the condition verified explicitly.
%In two cases the condition is satisfied (Sections \ref{sec:gamma} and \ref{sec:beta}) and in the other it is not (Section \ref{sec:twopoint}).
%\vspace{4pt}

8. To close this section we briefly review the solution to the classical Brownian bridge problem with a known pinning time $T>0$ which will be used in our subsequent analysis.
When $T$ is known and fixed, the stopping problem \eqref{eqn:prob2} has an explicit solution (first derived in \cite{S} and later in \cite{EW}) given by
\begin{equation}\label{eqn:certpinV}
  V^T(t,x)=\left\{\begin{array}{ll}
  \sqrt{2\pi(T-t)}(1-B^2)e^{\frac{x^2}{2(T-t)}}\Phi\left(\frac{x}{\sqrt{T-t}}\right), & x<b^T(t),\\
  x, & x\geq b^T(t),
\end{array}\right.
\end{equation}
for $t<T$ and $V^T(1,0)=0$.
The function $\Phi(y)$ denotes the standard cumulative normal distribution function and $b^T(t):=B\sqrt{T-t}$ with $B$ the unique positive solution to
\begin{equation}\label{eqn:classicB}
  \sqrt{2\pi}(1-B^2)e^{\frac{1}{2}B^2}\Phi(B)=B,
\end{equation}
which is approximately 0.839924.
Further, the optimal stopping strategy is given by $\tau^*=\inf\{s\geq 0\,|\,X_{t+s}\geq b^T(t+s)\}$ for all $t<T$ and hence the optimal stopping region is given by
\begin{equation}\label{eqn:stoppingD}
  \D^T:=\{(t,x)\in[0,T]\times\R\,|\,x\geq b^T(t)\}
\end{equation}
with $\C^T:=([0,T]\times\R)\,\backslash\,\D^T$ denoting the continuation region.

%\marginal{\textbf{The section should probably finish here since I don't think the proposition below is correct, or at least the proof isn't. I also cannot seem to prove the converse result, i.e. that $V^T\leq V$ if $\mathrm{supp}\,\mu\subseteq[T,\infty)$, since there is a larger drift for a positive $X$ but small drift for a negative $X$ and the proof does not follow in the same way as \cite{EV2}.}}
\vspace{4pt}

\begin{remark}
It is intuitive that if $\mathrm{supp}\,\mu\subseteq[0,T]$ in \eqref{eqn:prob2}, then $V\leq V^T$, where $V^T$ is given in \eqref{eqn:certpinV}, and consequently $\D^T\subseteq\D$, where $\D^T$ is given in \eqref{eqn:stoppingD}. Formally, this can be seen by considering the value function in \eqref{eqn:prob2} if the true value of $\theta$ was revealed to the stopper immediately (at $t=0+$). Denoting this value by $\overline{V}$ it is clear that $\overline{V}=\int_t^T V^r\mu(dr)\leq V^T$ due to $\mathrm{supp}\,\mu\subseteq[0,T]$. Furthermore, since the set of stopping times when knowing the pinning time is larger than when not knowing the pinning time it is clear that $V\leq\overline{V}$ and the stated inequality follows.
\end{remark}

\section{The case of a gamma distributed prior}\label{sec:gamma}
1. It is perhaps most obvious to consider an exponentially distributed prior for $\theta$, however it appears that explicit computation of the function $f$ in \eqref{eqn:f} for such distributions is not possible.
A related distribution for which $f$ can be computed explicitly however is a gamma distribution $\Gamma(\alpha,\beta)$ when $\alpha=n-1/2$ for positive integers $n$.
Note that this distribution is supported on the semi-infinite interval $[0,\infty)$, but that the pinning time is still integrable with $\E[\theta]=\alpha/\beta$.
%Furthermore, the symmetry reduction used in Section \ref{sec:beta} is not possible here.
We also note that when $\beta=2$ the gamma distribution with $\alpha=\nu/2$ reduces to a chi-squared distribution of $\nu$ degrees of freedom, i.e. $\Gamma(\nu/2,2)=\chi^2_\nu$.
Therefore, this case encompasses chi-squared distributions with odd degrees of freedom, i.e.~$\chi^2_{2n-1}$ for $n\in\mathbb{Z}^+$.
For these distributions we have the following result.

\begin{proposition}\label{prop:gamf}
Let $\kappa=0$ and $\theta\sim\Gamma(\alpha,\beta)$ with $\beta>0$ and $\alpha=n-1/2$ (for $n\in\mathbb{Z}^+$) such that
\begin{equation}\label{eqn:gammadens}
  \mu(dr)=\frac{\beta^{\alpha}}{\Gamma(\alpha)}r^{\alpha-1}e^{-\beta r}dr.
\end{equation}
The function $f$ in \eqref{eqn:f} can be calculated explicitly as
\begin{equation}\label{eqn:gamf}
  f(t,x)=\frac{\sqrt{2\beta}}{|x|}Q(t,x)
\end{equation}
with
\begin{equation}\label{eqn:gamf2}
  Q(t,x):=\frac{\sum_{k=0}^{n-1}\binom{n-1}{k}t^k\left(2\beta/x^2\right)^{k/2}K_{n-k-3/2}\left(\sqrt{2\beta}|x|\right)}{\sum_{k=0}^{n-1}\binom{n-1}{k}t^k\left(2\beta/x^2\right)^{k/2}K_{n-k-1/2}\left(\sqrt{2\beta}|x|\right)},
\end{equation}
where $K_\nu(\cdot)$ is the modified Bessel function of the second kind (of order $\nu$).
Hence the drift function in \eqref{eqn:X} is given by
\begin{equation}\label{eqn:gamdrift}
  -xf(t,x)=-\sqrt{2\beta}\,\mathrm{sgn}(x)Q(t,x)
\end{equation}
where \emph{sgn} is defined as
\begin{eqnarray}
  \mathrm{sgn}(x):=\left\{\begin{array}{rl}
  -1,& \textrm{ for }\, x<0\\
  0,& \textrm{ for }\, x=0\\
  +1,& \textrm{ for }\, x>0.\\
  \end{array}\right.
\end{eqnarray}
\end{proposition}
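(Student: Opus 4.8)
The plan is to evaluate the two integrals appearing in the ratio \eqref{eqn:f} directly, exploiting the explicit gamma density and the half-integer value of $\alpha$. Setting $\kappa=0$ in \eqref{eqn:f}, the exponent reduces to $-\tfrac{rx^2}{2t(r-t)}$, and the prefactor $\beta^{\alpha}/\Gamma(\alpha)$ of $\mu(dr)$ cancels between numerator and denominator, so the formula \eqref{eqn:f} (whose validity for the non-integrable choice $q(r)=1/(r-t)$ has already been secured in Proposition \ref{prop:dynamics}) is all that is needed. First I would perform the change of variables $r=t+s$, $s\in(0,\infty)$, under which $\tfrac{rx^2}{2t(r-t)}=\tfrac{x^2}{2t}+\tfrac{x^2}{2s}$ and $e^{-\beta r}=e^{-\beta t}e^{-\beta s}$; the $s$-independent factors $e^{-x^2/(2t)}$ and $e^{-\beta t}$ then cancel in the ratio as well.

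Second, I would use $\alpha=n-\tfrac12$. After the substitution, the denominator integrand carries the factor $r^{\alpha-1}\sqrt{r}\,(r-t)^{-1/2}=(t+s)^{\alpha-1/2}s^{-1/2}=(t+s)^{n-1}s^{-1/2}$, and the numerator integrand the same with an extra $1/s$. The quantity $(t+s)^{n-1}$ is a polynomial in $s$ precisely because $n-1$ is a non-negative integer, and the binomial theorem gives $(t+s)^{n-1}=\sum_{k=0}^{n-1}\binom{n-1}{k}t^k s^{n-1-k}$; this is the source of the finite sums in \eqref{eqn:gamf2}, and the interchange of the finite sum with the integral is immediate.

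Third, each resulting integral is of the form $\int_0^\infty s^{\nu-1}e^{-\beta s-x^2/(2s)}\,ds$ with $\nu=n-k-\tfrac12$ for the denominator terms and $\nu=n-k-\tfrac32$ for the numerator terms. I would invoke the classical representation $\int_0^\infty s^{\nu-1}e^{-\beta s-x^2/(2s)}\,ds=2\,(x^2/(2\beta))^{\nu/2}K_\nu(\sqrt{2\beta}\,|x|)$, valid for every real $\nu$ (using $K_\nu=K_{-\nu}$) and convergent here by virtue of $e^{-\beta s}$ at $s=\infty$ and $e^{-x^2/(2s)}$ at $s=0$ (for $x\neq0$). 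Substituting these evaluations, the factors of $2$ cancel in the ratio, and after factoring the common power $(x^2/(2\beta))^{(n-1/2)/2}$ out of both sums — which leaves exactly the factor $(2\beta/x^2)^{k/2}$ in each summand — one is left with $f(t,x)=(x^2/(2\beta))^{-1/2}Q(t,x)=\tfrac{\sqrt{2\beta}}{|x|}Q(t,x)$, i.e.\ \eqref{eqn:gamf}–\eqref{eqn:gamf2}. Multiplying by $-x$ and writing $x/|x|=\mathrm{sgn}(x)$ then gives \eqref{eqn:gamdrift}.

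I do not expect a genuine obstacle: the argument is essentially careful bookkeeping. The three points that need attention are (i) that the reduction of $(t+s)^{\alpha-1/2}$ to a polynomial hinges on $\alpha-\tfrac12=n-1\in\mathbb{Z}_{\geq0}$, so that the representation is finite and closed-form; (ii) that the Bessel integral is applied within its range of validity, in particular that the numerator integral converges at $s=0$, which requires $x\neq0$ and is consistent with the earlier remark that $f(t,0)=\infty$ when $\mu$ has positive density; and (iii) the accounting of powers of $x^2/(2\beta)$ when extracting the common factor, which is where the overall $\sqrt{2\beta}/|x|$ in \eqref{eqn:gamf} originates (the numerator exponents being exactly $\tfrac12$ smaller than the denominator exponents term by term).
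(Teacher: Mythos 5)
Your proof is correct and follows essentially the same route as the paper's: change of variables to isolate the $(r-t)$ dependence, binomial expansion of the resulting $(n-1)$-st power (which is exactly where $\alpha=n-\tfrac12$ is used), and term-by-term evaluation via the Bessel-$K$ integral $\int_0^\infty s^{\nu-1}e^{-As-B/s}\,ds=2(B/A)^{\nu/2}K_\nu(2\sqrt{AB})$. The only cosmetic difference is that the paper substitutes $u=1/(r-t)$ while you substitute $s=r-t$; these are reciprocal parametrizations of the same integral, and both exploit $K_\nu=K_{-\nu}$ to land on the stated form.
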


\begin{proof}
In order to compute \eqref{eqn:f} with the density \eqref{eqn:gammadens} we must evaluate the integral
  \[\frac{\beta^{\alpha}}{\Gamma(\alpha)}\int_t^{\infty}\frac{r^{\alpha-1}}{(r-t)^a}\sqrt{\frac{r}{r-t}}\exp\left(-\frac{rx^2}{2t(r-t)}-\beta r\right)dr\]
with $a=0$ corresponding to the integral in the denominator of \eqref{eqn:f} and $a=1$ to the integral in the numerator.
Letting $u=1/(r-t)$, the integral above reduces to
\begin{equation}\label{eqn:ident2}
  \frac{\beta^{\alpha}}{\Gamma(\alpha)}e^{-\tfrac{1}{2}x^2-\beta t}\int_0^{\infty}(1+tu)^{\alpha-1/2}u^{a-\alpha-1}e^{-\tfrac{1}{2}x^2 u-\beta/u}du.
\end{equation}
We were not able to find an explicit computation of the above integral for arbitrary $\alpha$.
However, it can be seen that if $\alpha-1/2$ is a non-negative integer then the term $(1+tu)^{\alpha-1/2}$ can be expanded into integer powers of $u$ and we can apply the following known integral identity (cf. \cite[p.~313]{Er1})%\marginal{This is Eq.~(17) in \cite{Er1}.}
\begin{equation}\label{eqn:intident}
  \int_{0}^{\infty}u^{\nu-1}e^{-Au-B/u}du=2(B/A)^{\frac{\nu}{2}}K_{\nu}(2\sqrt{AB})
\end{equation}
valid for $A,B>0$.
Letting $\alpha=n-1/2$ we can identify $\nu=a+k-n+1/2$, $A=x^2/2$, $B=\beta$, and using $a=0$ and $1$, respectively, to perform the integration we obtain the stated expression.
\end{proof}

\begin{corollary}\label{cor:gamf}
When $\theta\sim \Gamma(1/2,\beta)$ for $\beta>0$ the function $f$ in \eqref{eqn:gamf} becomes time independent and given by
\begin{equation}\label{eqn:gamfhalf}
  f(t,x)=\sqrt{2\beta}/|x|.
\end{equation}
\end{corollary}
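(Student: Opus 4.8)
The plan is to simply specialise Proposition~\ref{prop:gamf} to the case $n=1$. Since $\Gamma(1/2,\beta)$ corresponds to $\alpha=1/2=n-1/2$, we are forced to take $n=1$, so the two sums appearing in $Q(t,x)$ in \eqref{eqn:gamf2} each reduce to their single $k=0$ term. This gives
\[
  Q(t,x)=\frac{K_{-1/2}\!\left(\sqrt{2\beta}\,|x|\right)}{K_{1/2}\!\left(\sqrt{2\beta}\,|x|\right)},
\]
and the $t$-dependence has vanished entirely. Conceptually this is because the binomial expansion of $(1+tu)^{\alpha-1/2}$ used in the proof of Proposition~\ref{prop:gamf} is trivial when $\alpha=1/2$, so no powers of $t$ ever enter the computation in the first place.

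It then remains to invoke the reflection identity for the modified Bessel function of the second kind, $K_{-\nu}(z)=K_{\nu}(z)$ for all orders $\nu$ and all $z>0$; in particular $K_{-1/2}=K_{1/2}$, so that $Q(t,x)\equiv 1$. Substituting back into \eqref{eqn:gamf} yields $f(t,x)=\sqrt{2\beta}/|x|$, which is manifestly independent of $t$, as claimed.

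Alternatively, and slightly more transparently, one can bypass Proposition~\ref{prop:gamf} and argue directly from \eqref{eqn:ident2}: with $\alpha=1/2$ the factor $(1+tu)^{\alpha-1/2}$ is identically $1$, so applying the integral identity \eqref{eqn:intident} with $\nu=a-1/2$, $A=x^2/2$ and $B=\beta$ for $a=0$ (the denominator of \eqref{eqn:f}) and $a=1$ (the numerator) and using the elementary closed form $K_{1/2}(z)=\sqrt{\pi/(2z)}\,e^{-z}$, one finds that the common exponential and Bessel factors cancel and the remaining powers of $(2\beta/x^2)^{1/4}$ combine to give the ratio $\sqrt{2\beta}/|x|$.

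There is no real obstacle here, since the statement is a direct corollary; the only point deserving a moment's care is recording the evenness of $K_\nu$ in its order (equivalently, the elementary half-integer closed form), which is precisely what makes the Bessel ratio collapse to the constant $1$ and renders $f$ time homogeneous.
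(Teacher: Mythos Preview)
Your proof is correct and follows essentially the same route as the paper: set $n=1$ in \eqref{eqn:gamf2}, reduce each sum to its single $k=0$ term, and use the reflection identity $K_{-\nu}=K_{\nu}$ to conclude $Q\equiv 1$. Your additional remarks (the conceptual explanation via the triviality of $(1+tu)^{\alpha-1/2}$ and the alternative direct computation from \eqref{eqn:ident2}) are nice elaborations but not required.
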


\begin{proof}
Setting $n=1$ in \eqref{eqn:gamf2} reveals that
\begin{equation}
  Q(t,x)=\frac{K_{-1/2}(\sqrt{2\beta}|x|)}{K_{1/2}(\sqrt{2\beta}|x|)}=1
\end{equation}
upon noting that $K_\nu=K_{-\nu}$, which produces the desired result.
\end{proof}

\begin{remark}
Corollary \ref{cor:gamf} reveals the remarkable property that when $\theta\sim\Gamma(1/2,\beta)$ the dynamics in \eqref{eqn:X} are \emph{time homogeneous} and dependent only on the sign of $X$.
As such, a movement of the process away from zero increases the stopper's expected pinning time, and hence decreases the expected pinning force (via the $1/(\theta-t)$ term), by just enough to offset the increased pinning force due to the process being further way from zero (via the $-X_t$ term).
\end{remark}

%\begin{remark}
%The time homogeneity when $\theta\sim\Gamma(1/2,\beta)$ also raises the interesting question of finding conditions on $\mu$ that will lead to time-homogeneous dynamics of $X$.
%Given the rather involved structure in \eqref{eqn:f} however, such conditions appear difficult to establish.
%\end{remark}

\vspace{4pt}

Next, we observe from \eqref{eqn:gamf2} that $Q(t,x)\leq 1$ (since $\nu\mapsto K_\nu$ is increasing for $\nu\geq 0$) and hence we conclude from \eqref{eqn:gamf} that $\lim_{x\to 0}f(t,x)=\infty$, as expected from the strictly positive density of $\mu$ at $r=t$ for all $t$.
We also observe that the drift function in \eqref{eqn:gamdrift} has a discontinuity at $x=0$ since $\lim_{x\downarrow 0}Q(t,x)>0$.
However, despite this discontinuity, the SDE in \eqref{eqn:X} has a unique strong solution since $|xf(t,x)|\leq\sqrt{2\beta}$ and the drift is bounded (cf. \cite{Z}).
Finally, we note that in the case when $\alpha=1/2$, the SDE in \eqref{eqn:X} is often referred to as \emph{bang-bang Brownian motion} or \emph{Brownian motion with alternating drift}.
This process has arisen previously in the literature in the study of reflected Brownian motion with drift.
In fact, the drawdown of a Brownian motion with drift (i.e., the difference between the current value and its running maximum) is equal in law to the absolute value of bang-bang Brownian motion (see \cite{GS}). %(see also \cite[][Appendix 1.15]{BS}).
We also refer the reader to the recent work of \cite{MS} who consider discounted optimal stopping problems for a related process with a discontinuous (broken) drift.

\vspace{4pt}
%2. Before we move on to discuss the solution of \eqref{eqn:prob2} under a Gamma distributed prior, we comment briefly on the conditional expected pinning time of the Brownian bridge, which can also be computed explicitly in this case as follows.
%
%\begin{proposition}\label{prop:gammamean}
%For $\theta\sim\Gamma(\alpha,\beta)$ with $\beta>0$ the expected pinning time is given by
%\begin{eqnarray}
%  \hspace{5mm} \E[\theta-t\,|\,\mathcal{F}_t^X]=\left\{\begin{array}{ll}
%  1/2\beta+|x|/\sqrt{2\beta},& \textrm{ for }\, \alpha = 1/2, \label{eqn:exp1} \\
%  \frac{1/2\beta+|x|/\sqrt{2\beta}+t}{1+\sqrt{2\beta}t/|x|},& \textrm{ for }\, \alpha = 3/2. \label{eqn:exp2} \\
%  \end{array}\right.
%\end{eqnarray}
%\end{proposition}
%
%\begin{proof}
%A direct application of Proposition \ref{prop:conditioning} with $q(r)=r-t$ yields the desired result. Similar computations to those used in the proof of Proposition \ref{prop:gamf} can be used, for example exploiting \eqref{eqn:ident2} with $a=-1$ and $0$ is helpful.
%\end{proof}
%
%Proposition \ref{prop:gammamean} shows that for $\alpha=1/2$ the expected time left until pinning, $\theta-t$, does not depend on time and is linearly increasing in $|x|$.
%The independence of $t$ is also manifested in the time-homogeneity of the function $f$ in \eqref{eqn:gamf} when $\alpha=1/2$.
%
%\vspace{4pt}

2. Turning now to the optimal stopping problem in \eqref{eqn:prob2}, we find that the time homogeneity in the case when $\theta\sim \Gamma(1/2,\beta)$ allows us to solve the problem in closed form.
For all other values of $\alpha$ considered in Proposition \ref{prop:gamf} the problem is time \emph{in}homogeneous and must be solved numerically.
We therefore restrict our attention to the $\alpha=1/2$ ($n=1$) case and expose the solution there in full detail.
The other cases are left for the subject of future research.

%observe from \eqref{eqn:gamfhalf} that the function $-xf(t,x)$ is non-increasing and thus we should expect a one-sided stopping region (from Proposition \ref{thm:oneside}).
To derive our candidate solution to \eqref{eqn:prob2} when $\theta\sim\Gamma(1/2,\beta)$ we note that $\mu$ has a continuous density, and computing the killing rate from \eqref{eqn:kill}, we see that $q(t)=\sqrt{2\beta}$, a constant.
Therefore the optimal stopping problem also becomes time homogeneous.
To proceed, we assume the optimal stopping strategy is of the form $\tau=\inf\{s\geq 0\,|\,X_{t+s}^{t,x}\geq b\}$ for some constant $b$ to be determined.
This assumption with be justified in the verification arguments below (Theorem \ref{thm:verifgamma}).
Under this form of stopping strategy the general theory of optimal stopping (see, for example, \cite{PS}) indicates that the value function and optimal stopping boundary should satisfy the following free-boundary problem.
\begin{eqnarray}
  \left\{\begin{array}{rl}\label{eqn:FBPgamma}
  \mathbb{L}_X\widehat{V}(x)=0, & \textrm{ for } x\in(-\infty,0)\cup(0,b),\\
  \widehat{V}(x)=x, & \textrm{ for } x\geq b,\\
  \widehat{V}^{\prime}(x)=1, & \textrm{ at } x=b,\\
  \widehat{V}(0-)=\widehat{V}(0+), & \textrm{ at } x=0,\\
  \widehat{V}^{\prime}(0+)-\widehat{V}^{\prime}(0-)=2\sqrt{2\beta}\,\widehat{V}(0), & \textrm{ at } x=0,\\
  \widehat{V}(x)<\infty, & \textrm{ as } x\to-\infty,\\
  \end{array}\right.
\end{eqnarray}
where $\mathbb{L}_X$ denotes the infinitesimal generator of $X$.
Recall that the derivative condition at $x=b$ represents smooth pasting and the derivative condition at $x=0$ is due to the elastic killing of the process at zero (cf. \cite[][p.~29]{BS}).
Problem \eqref{eqn:FBPgamma} can be solved explicitly to yield the following candidate for the optimal stopping value
\begin{eqnarray}
  \widehat{V}(x)=\left\{\begin{array}{ll}\label{eqn:gammaval}
  be^{-1}, & \textrm{ for } x\leq 0,\\
  be^{(x/b)-1}, & \textrm{ for } 0<x<b,\\
  x, & \textrm{ for } x\geq b,
  \end{array}\right.
\end{eqnarray}
where the optimal stopping threshold is given by $b=1/2\sqrt{2\beta}$.

Figure \ref{fig:gammaval} plots the value function in \eqref{eqn:gammaval} and the associated optimal stopping boundary for various values of $\beta$.
Note the kink in the value function at zero.
We can also see that $\beta\mapsto b(\beta)$ is decreasing and hence $\beta\mapsto\widehat{V}$ is also decreasing.
This is consistent with the fact that $\E[\theta]=1/2\beta$ and hence as $\beta$ increases the process is expected to pin sooner and the option value to stop smaller.
We also observe that $\lim_{\beta\downarrow 0}b(\beta)=\infty$ which is consistent with the fact that it would never be optimal to stop in this limit as the process would become a standard Brownian motion (which never pins).

\begin{figure}[htp!]\centering
  \psfrag{x}{$x$}
  \psfrag{V}{$\widehat{V}$}
  \psfrag{beta}{$\beta$}
  \psfrag{b}{$b$}
  \includegraphics[width=0.45\textwidth]{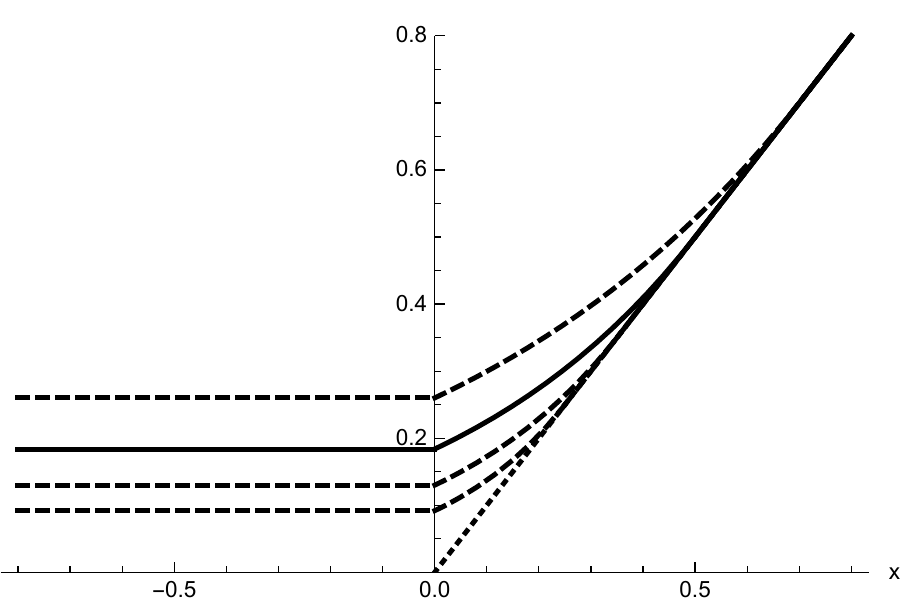}\hspace{2pt}
  \includegraphics[width=0.45\textwidth]{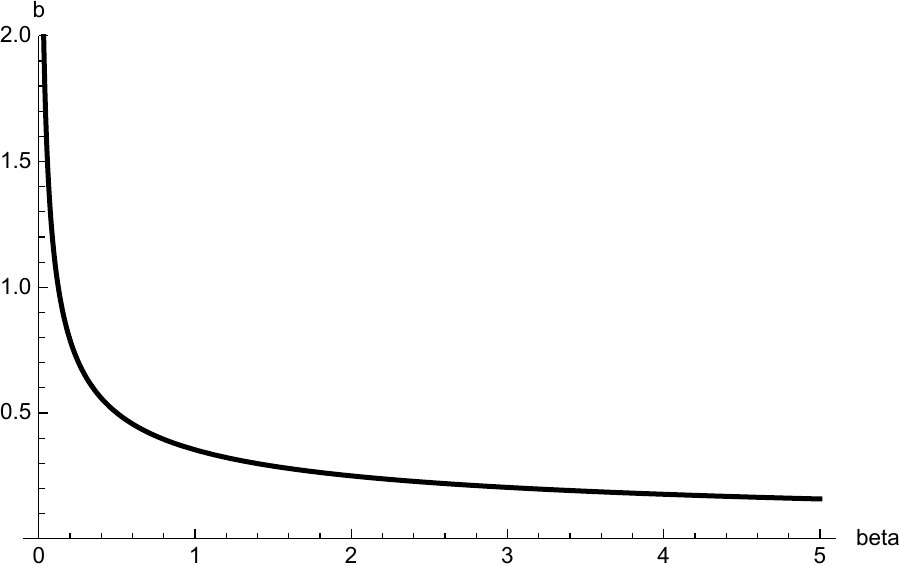}
  \caption{On the left: The candidate value function $\widehat{V}$ for the case of $\theta\sim\Gamma(1/2,\beta)$ given by \eqref{eqn:gammaval} for $\beta=1/2$ (solid line) and for $\beta=\{1/4,1,2\}$ (dashed lines). Lower curves correspond to higher values of $\beta$. On the right: The optimal stopping boundary $b=1/2\sqrt{2\beta}$.} \label{fig:gammaval}
\end{figure}

\vspace{4pt}

3. We conclude this section with the verification that the candidate value function in \eqref{eqn:gammaval} is indeed the solution to the optimal stopping problem.

\begin{theorem}\label{thm:verifgamma} \emph{(Verification).}
The value function $\widehat{V}$ defined in \eqref{eqn:gammaval} coincides with the function $V$ defined in \eqref{eqn:prob2} with $\theta\sim\Gamma(1/2,\beta)$.
Moreover, the stopping time $\tau^*=\inf\{s\geq 0\,|\,X_{t+s}^{t,x}\geq 1/2\sqrt{2\beta}\}$ is optimal.
\end{theorem}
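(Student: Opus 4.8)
The plan is to use a standard verification (martingale/supermartingale) argument adapted to the elastic killing at zero. The key object is the process $e^{-\int_0^s q(t+u)\,d\ell_u^0(X)}\widehat V(X_{t+s}^{t,x})$, which in our time-homogeneous case reads $e^{-\sqrt{2\beta}\,\ell_s^0(X)}\widehat V(X_{t+s})$. I would show this is a supermartingale for every starting point, a martingale up to the hitting time $\tau^*=\inf\{s\ge 0:X_{t+s}\ge b\}$ of the boundary $b=1/(2\sqrt{2\beta})$, and then combine the two facts with optional sampling to sandwich $V$ between $\widehat V$ from above and below. Because the payoff $x$ is discounted by local time exactly as in \eqref{eqn:local}, the value of stopping at $\tau^*$ is $\E[e^{-\sqrt{2\beta}\,\ell_{\tau^*}^0(X)}X_{t+\tau^*}^{t,x}]=\E[e^{-\sqrt{2\beta}\,\ell_{\tau^*}^0(X)}b\,e^{(b/b)-1}\cdot\ldots]$; more precisely, since $\widehat V(X_{t+\tau^*})=X_{t+\tau^*}$ on $\{X_{t+\tau^*}=b\}$, the martingale property gives $\E[e^{-\sqrt{2\beta}\ell^0_{\tau^*}}X_{t+\tau^*}^{t,x}]=\widehat V(x)$, so $V\ge\widehat V$; the supermartingale property gives, for arbitrary $\tau\le T-t$, $\E[e^{-\sqrt{2\beta}\ell^0_{\tau}}X_{t+\tau}^{t,x}]\le\E[e^{-\sqrt{2\beta}\ell^0_{\tau}}\widehat V(X_{t+\tau})]\le\widehat V(x)$, using $\widehat V(y)\ge y$ pointwise, so $V\le\widehat V$.

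\smallskip

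\noindent\textbf{Key steps, in order.} First I would record the elementary facts about $\widehat V$: it is continuous, $C^1$ on $\R\setminus\{0\}$ and $C^2$ on $\R\setminus\{0,b\}$, satisfies $\widehat V(y)\ge y$ everywhere (the only non-trivial check is on $(0,b)$, where $b\,e^{(y/b)-1}\ge y$ follows from convexity of $e^{(\cdot)-1}$ versus its tangent line at $y=b$), and $\mathbb{L}_X\widehat V\le 0$ in the appropriate (distributional) sense on all of $\R\setminus\{b\}$ — equality on $(-\infty,0)\cup(0,b)$ by construction in \eqref{eqn:FBPgamma}, and on $(b,\infty)$ we need $\mathbb{L}_X y=-\sqrt{2\beta}\,\mathrm{sgn}(y)\cdot 1\le 0$, which holds since $y>b>0$ there. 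Second, I would apply the Itô–Tanaka / Meyer–Itô change-of-variables formula to $s\mapsto e^{-\sqrt{2\beta}\ell_s^0(X)}\widehat V(X_{t+s})$, being careful about the two sources of non-smoothness: the kink of $\widehat V$ at $0$ produces a local-time term at $0$ whose coefficient is exactly $\tfrac12(\widehat V'(0+)-\widehat V'(0-))=\sqrt{2\beta}\,\widehat V(0)$ by the fifth line of \eqref{eqn:FBPgamma}, and this is precisely cancelled by the drift coming from differentiating the $e^{-\sqrt{2\beta}\ell_s^0}$ factor (this is the whole point of the elastic-killing boundary condition); the kink of $\widehat V$ at $b$ contributes a term with coefficient $\tfrac12(\widehat V'(b+)-\widehat V'(b-))=0$ by smooth pasting, so it vanishes. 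What remains after cancellation is a bounded-variation part equal to $\int_0^s e^{-\sqrt{2\beta}\ell_u^0}(\mathbb{L}_X\widehat V)(X_{t+u})\,du$, which is $\le 0$, plus a local martingale part $\int_0^s e^{-\sqrt{2\beta}\ell_u^0}\widehat V'(X_{t+u})\,d\bar B_{t+u}$. Third, I would argue the local martingale is a true martingale: $\widehat V'$ is bounded (it equals $e^{(y/b)-1}\in(e^{-1},1]$ on $(0,b)$, $0$ on $(-\infty,0)$, $1$ on $(b,\infty)$) and the discount factor is in $[0,1]$, so the integrand is bounded, and on the finite horizon $[0,T-t]$ the martingale property on bounded intervals is immediate (here $T=\infty$, but one works on $[0,N]$ and lets $N\to\infty$, using that $\widehat V$ is bounded on $(-\infty,b]$ by $b$ and that $X$ is recurrent-ish / that $e^{-\sqrt{2\beta}\ell^0}X$ is uniformly integrable because $\widehat V$ dominates it and $\widehat V$ is bounded above on the relevant range — more carefully, $\E[e^{-\sqrt{2\beta}\ell^0_\tau}|X_{t+\tau}|]$ needs a uniform bound, obtained from $|X_{t+\tau}|\le X_{t+\tau}^+ + X_{t+\tau}^-$ and controlling the negative part via the fact that $X$ is killed at $\theta<\infty$ a.s.). Fourth, I would run optional sampling at $\tau\wedge N$ and $\tau^*\wedge N$ respectively, pass to the limit, and conclude the two inequalities above, hence $V=\widehat V$ and $\tau^*$ optimal.

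\smallskip

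\noindent\textbf{The main obstacle.} The delicate point is the change-of-variables step combined with the infinite horizon and the need for uniform integrability so that optional sampling is legitimate on $[0,\infty)$ rather than just on $[0,N]$. The local-time bookkeeping at $x=0$ — verifying that the Meyer–Itô local-time term at zero is exactly annihilated by the drift of the discounting functional $e^{-\sqrt{2\beta}\ell^0}$ — is conceptually the heart of the matter but is forced by the boundary condition $\widehat V'(0+)-\widehat V'(0-)=2\sqrt{2\beta}\,\widehat V(0)$; the genuinely technical work is (i) justifying that the discounted value process is uniformly integrable, for which I would use that $\widehat V$ is bounded above by $\max(b,\,x)$, that the negative excursions of $X$ are controlled because $\theta$ has all the integrability we need ($\E[\theta]=1/(2\beta)<\infty$, and in fact $X_{t+s}\to 0$ as $s\uparrow\theta-t$), and a dominated-convergence argument as $N\to\infty$; and (ii) confirming that for $x<0$ and for $0<x<b$ the hitting time $\tau^*$ of level $b$ is a.s.\ finite-or-killed, i.e.\ that $\{\tau^*=\infty\}\subseteq\{\theta\le t+\tau^*\}$ up to null sets, so that on $\{\tau^*=\infty\}$ the discounted payoff is $0=\widehat V$-consistent in the limit. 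Once uniform integrability is in hand, everything else is the routine supermartingale-characterization argument of optimal stopping (cf.\ \cite{PS}).
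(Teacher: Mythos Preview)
Your proposal is correct and rests on the same three ingredients as the paper's proof: the local-time-space (It\^o--Tanaka) formula, the elastic boundary condition at $0$ producing the exact cancellation, and the boundedness of $\widehat V'$ to upgrade the local martingale to a true martingale. The organisation, however, differs in one useful respect.

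You apply It\^o to the \emph{product} $s\mapsto e^{-\sqrt{2\beta}\,\ell_s^0}\,\widehat V(X_{t+s})$, so that the kink of $\widehat V$ at $0$ is annihilated \emph{inside} the It\^o expansion by the differential of the discount factor; you then invoke the representation \eqref{eqn:local} to connect back to $\E[X_{t+\tau}\mathbf 1_{\{t+\tau<\theta\}}]$. The paper instead applies the local-time-space formula to $\widehat V(X_{\tau\wedge\theta})$ \emph{without} the multiplicative functional, obtaining a local-time term $\sqrt{2\beta}\,\widehat V(0)\,\E[\ell^0_{\tau\wedge\theta}]$, and separately computes $\E[\widehat V(0)\mathbf 1_{\{\theta\le\tau\}}]=\sqrt{2\beta}\,\widehat V(0)\,\E[\ell^0_{\tau\wedge\theta}]$ via the compensator \eqref{eqn:compK}; the two local-time terms then cancel when the two displays are combined. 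In short: you package the cancellation into a single product-rule It\^o step, the paper splits it across two identities and matches them.

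What the paper's route buys is precisely the removal of the obstacle you flag. By working on the random interval $[0,\tau\wedge\theta]$, the horizon is a.s.\ finite with $\E[\tau\wedge\theta]\le\E[\theta]=1/(2\beta)<\infty$, so the stochastic integral $M$ (with bounded integrand $\widehat V'$) is square-integrable at $\tau\wedge\theta$ and optional sampling is immediate---no infinite-horizon uniform integrability argument is needed, and no extension of $X$ beyond $\theta$ has to be justified. Your approach is the more standard ``discounted optimal stopping'' template and would go through, but the paper's decomposition is slightly more economical here precisely because it exploits $\E[\theta]<\infty$ to sidestep the limiting procedure.
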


\begin{proof}
Setting $X=X^{t,x}$ to simplify the notation, we first note that the problem is time homogeneous and hence for an arbitrary stopping time $\tau$,
\begin{align}
  \E\big[\widehat{V}(X_{\tau\wedge\theta})\big] &= \E\big[\widehat{V}(X_{\tau})\mathbf{1}_{\{\tau<\theta\}}\big] + \E\big[\widehat{V}(X_{\theta})\mathbf{1}_{\{\theta\leq\tau\}}\big] \nonumber \\
  &= \E\big[\widehat{V}(X_{\tau})\mathbf{1}_{\{\tau<\theta\}}\big] + \widehat{V}(0)\E\big[\mathbf{1}_{\{\theta\leq\tau\}}\big] \nonumber \\
  &\geq \E\big[X_{\tau}\mathbf{1}_{\{\tau<\theta\}}\big] + \widehat{V}(0)\P\left(\theta\leq\tau\right) \nonumber \\
  &= \E\big[X_{\tau}\mathbf{1}_{\{\tau<\theta\}}\big] + \sqrt{2\beta}\,\widehat{V}(0)\E[\ell^0_{\tau\wedge\theta}(X)], \label{eqn:ineqgam}
\end{align}
where the last equality above is due to \eqref{eqn:probpin} upon setting $t=0$ and $\epsilon=\tau$ and recalling that $q(t)=\sqrt{2\beta}$.
Secondly, an application of the local time-space formula (cf. \cite{P}), given that $\widehat{V}^{\prime}$ is continuous across $x=b$ but not across $x=0$, yields
\begin{align}
  \E\big[\widehat{V}(X_{\tau\wedge\theta})\big] &= \widehat{V}(x)+\E\int_0^{\tau\wedge\theta}\mathbb{L}_X\widehat{V}(X_{s})\mathbf{1}_{\{X_{s}\neq 0\textrm{ or }b\}}ds\nonumber \\
  &\quad\quad +\E\left[M_{\tau\wedge\theta}\right]+\E\int_0^{\tau\wedge\theta}\frac{1}{2}\big(\widehat{V}^{\prime}(X_{s}+)-\widehat{V}^{\prime}(X_{s}-)\big)\mathbf{1}_{\{X_{s}=0\}}d\ell_s^0(X) \nonumber \\
  &= \widehat{V}(x)-\sqrt{2\beta}\,\E\int_0^{\tau\wedge\theta}\textrm{sgn}(X_{s})\mathbf{1}_{\{X_{s}>b\}}ds \nonumber \\
  &\quad\quad +\E\left[M_{\tau\wedge\theta}\right]+\sqrt{2\beta}\,\E\int_0^{\tau\wedge\theta}\widehat{V}(X_s)\mathbf{1}_{\{X_{s}=0\}}d\ell_s^0(X) \nonumber \\
  &=: \widehat{V}(x)+\E\left[\Lambda_{\tau\wedge\theta}\right]+\E\left[M_{\tau\wedge\theta}\right]+\sqrt{2\beta}\,\widehat{V}(0)\E\int_0^{\tau\wedge\theta}d\ell_s^0(X)\nonumber \\
  &= \widehat{V}(x)+\E\left[\Lambda_{\tau\wedge\theta}\right]+\E\left[M_{\tau\wedge\theta}\right]+\sqrt{2\beta}\,\widehat{V}(0)\E[\ell_{\tau\wedge\theta}^0(X)] \nonumber %\label{eqn:ineq2}
\end{align}
where $M_t:=\int_0^{t}\widehat{V}^{\prime}(X_{s})\mathbf{1}_{\{X_{s}\neq 0\textrm{ or }b\}}d\bar{B}_{s}$ is a local martingale and $\Lambda$ is a decreasing process since $b\geq 0$.
Thirdly, combining \eqref{eqn:ineqgam} with the above equality we see that
\begin{equation}\label{eqn:verifgam}
  \E\left[X_{\tau}\mathbf{1}_{\{\tau<\theta\}}\right]\leq\E\big[\widehat{V}(X_{\tau})\mathbf{1}_{\{\tau<\theta\}}\big]=\widehat{V}(x)+\E\left[\Lambda_{\tau\wedge\theta}\right]+\E\left[M_{\tau\wedge\theta}\right]\leq \widehat{V}(x)
\end{equation}
where the last inequality follows from the optional sampling theorem upon noting that $\widehat{V}^{\prime}$ is bounded and hence $M$ is a true martingale.
Consequently, taking the supremum over all admissible stopping times in \eqref{eqn:verifgam} yields
  \[V(x)=\sup_{\tau\geq 0}\E[X_{\tau}\mathbf{1}_{\{\tau<\theta\}}]\leq\widehat{V}(x).\]
To establish the reverse inequality note that, since $\widehat{V}(X_{\tau^*})=X_{\tau^*}$ and $\E[\Lambda_{\tau^*\wedge\theta}]=0$, both inequalities in \eqref{eqn:verifgam} become equalities for $\tau=\tau^*$.
Thus
  \[V(x)\geq\E[X_{\tau^*}\mathbf{1}_{\{\tau^*<\theta\}}]=\widehat{V}(x),\]
completing the proof.
\end{proof}

\section{The case of a beta distributed prior}\label{sec:beta}
1. Another natural prior to consider is that of a beta distribution $B(\alpha,\beta)$ for $\alpha,\beta>0$.
Such a distribution allows for the pinning time to occur on a bounded interval which is often the case in real life applications of Brownian bridges.
Once more, while explicit computation of the function $f$ does not appear possible under a general beta distribution for arbitrary $\alpha$ and $\beta$, it is possible to obtain an explicit expression when $\alpha=1/2$ (for any $\beta>0$) and when $\kappa=0$.
Therefore, in this section we make the standing assumption that $\alpha=1/2$ and $\kappa=0$. %\marginal{\textbf{Note that the assumption of $\kappa=0$ is needed here since the symmetry breaks down if $\kappa\neq 0$. Should we simply set $\kappa=0$ at the beginning and not mention it? It can be used in the case of 2-point prior however.}}
The case $\alpha=\beta=1/2$ is also of particular interest since $B(1/2,1/2)$ corresponds to the well-known \emph{arcsine} distribution (see \cite{GJL}).
We further note that, without loss of generality, the beta distribution defined over $[0,1]$ can be taken (rather than over $[0,T]$) since the scaling $t\to t/T$ and $X\to X/\sqrt{T}$ could be used otherwise.
Finally to aid with the interpretation of our results, we recall that the unconditional expectation of $\theta\sim B(1/2,\beta)$ can be calculated as $\E[\theta]=1/(1+2\beta)$ and hence the expected pinning time is decreasing in $\beta$ with $\lim_{\beta\downarrow 0}\E[\theta]=1$ and $\lim_{\beta\uparrow\infty}\E[\theta]=0$.

\begin{proposition}\label{prop:betaf}
Let $\kappa=0$ and $\theta\sim B(1/2,\beta)$ for $\beta>0$, such that
\begin{equation}\label{eqn:betadense}
  \mu(dr)=\frac{(1-r)^{\beta-1}}{\sqrt{r}\,B(1/2,\beta)}dr.
\end{equation}
The function $f$ in \eqref{eqn:f} can thus be computed explicitly as
\begin{equation}\label{eqn:fbeta}
  f(t,x)=\frac{g\left(x/\sqrt{1-t}\right)}{1-t} \quad \textrm{with} \quad g(z):=\frac{U\left(\beta,\tfrac{3}{2},\tfrac{1}{2}z^{2}\right)}{U\left(\beta,\tfrac{1}{2},\tfrac{1}{2}z^{2}\right)}
\end{equation}
where $U$ denotes Tricomi's confluent hypergeometric function (cf. \cite{T}).
\end{proposition}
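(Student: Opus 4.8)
The plan is to substitute the beta density \eqref{eqn:betadense} into the formula \eqref{eqn:f} for $f$ and reduce both the numerator and denominator integrals to Tricomi's confluent hypergeometric function $U$ via a suitable change of variables. First I would write out the denominator integral (the case $a=0$) and numerator integral (the case $a=1$) in the common form
\[
  \int_t^1\frac{(1-r)^{\beta-1}}{\sqrt{r}\,B(1/2,\beta)}\,\frac{1}{(r-t)^a}\sqrt{\frac{r}{r-t}}\,e^{-\frac{rx^2}{2t(r-t)}}\,dr
  =\frac{1}{B(1/2,\beta)}\int_t^1\frac{(1-r)^{\beta-1}}{(r-t)^{a+1/2}}\,e^{-\frac{rx^2}{2t(r-t)}}\,dr,
\]
since the $\sqrt{r}$ factors cancel, with $\mathrm{supp}\,\mu\subseteq[0,1]$ fixing the upper limit. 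Next I would make the substitution $r = t + (1-t)s$ with $s\in[0,1]$, so that $1-r=(1-t)(1-s)$, $r-t=(1-t)s$, and $dr=(1-t)\,ds$; this will pull the factor $(1-t)^{-a-1/2+\beta-1+1}$ outside and leave an integral over the unit interval. The exponent $-\tfrac{rx^2}{2t(r-t)}$ becomes $-\tfrac{x^2}{2(1-t)}\cdot\tfrac{1}{s}-\tfrac{x^2}{2t}$, so a further substitution or direct comparison should expose the variable $z^2/2$ with $z=x/\sqrt{1-t}$, together with a common $e^{-x^2/2t}$-type prefactor that cancels in the ratio.

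The key identification will be with the integral representation of $U$. I would invoke the standard formula
\[
  U(\alpha,\gamma,w)=\frac{1}{\Gamma(\alpha)}\int_0^{\infty}e^{-w\sigma}\sigma^{\alpha-1}(1+\sigma)^{\gamma-\alpha-1}\,d\sigma,
  \qquad \mathrm{Re}\,\alpha>0,\ \mathrm{Re}\,w>0
\]
(see \cite{T}), after converting the unit-interval integral into one over $(0,\infty)$ by the substitution $\sigma = (1-s)/s$ (equivalently $s=1/(1+\sigma)$), which sends $s\in(0,1)$ to $\sigma\in(0,\infty)$, with $1-s=\sigma/(1+\sigma)$, $s=1/(1+\sigma)$, and $ds=-d\sigma/(1+\sigma)^2$. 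Tracking the powers of $s$ and $1-s$ carefully should produce $\alpha=\beta$, $\gamma-\alpha-1$ matching the remaining exponent, and $w=z^2/2$; the denominator ($a=0$) will give $\gamma=\tfrac12$ and the numerator ($a=1$) will give $\gamma=\tfrac32$. Forming the quotient, the prefactors $(1-t)^{\text{power}}$ combine to leave exactly $1/(1-t)$ in front, the $\Gamma(\beta)$ and $B(1/2,\beta)$ constants cancel, and one is left with $f(t,x)=\frac{1}{1-t}\,U(\beta,\tfrac32,\tfrac12 z^2)/U(\beta,\tfrac12,\tfrac12 z^2)$ with $z=x/\sqrt{1-t}$, which is \eqref{eqn:fbeta}.

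The main obstacle I anticipate is bookkeeping of the exponents through the two successive changes of variables so that the parameters of $U$ come out correctly and the powers of $(1-t)$ collapse to a single factor; in particular one must check that the integrability condition $\mathrm{Re}\,\alpha=\beta>0$ for the $U$-representation is met (it is, for all $\beta>0$) and that, as noted after \eqref{eqn:f}, for $a=1$ the integrand behaves like $(r-t)^{-3/2}$ near $r=t$ so the numerator integral must be interpreted exactly as in the truncation-and-limit argument used in the proof of Proposition \ref{prop:dynamics} — equivalently, one works with $q_\epsilon$ and passes to the limit, the $U$-integral representation being the clean closed form of that limit. A minor point is confirming that $U(\beta,\tfrac12,w)>0$ for $w>0$ so that the ratio $g(z)$ is well defined, which follows immediately from the positivity of the integrand in the integral representation. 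Everything else is the routine substitution described above.
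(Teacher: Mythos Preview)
Your proposal is correct and follows essentially the same route as the paper: the paper makes the single substitution $u=(1-r)/(r-t)$, which is precisely the composition of your two substitutions $r=t+(1-t)s$ followed by $\sigma=(1-s)/s$, and then identifies the resulting integral with the standard representation $U(p,q,y)=\tfrac{1}{\Gamma(p)}\int_0^\infty u^{p-1}(1+u)^{q-p-1}e^{-yu}\,du$ with $p=\beta$, $q=a+\tfrac12$, $y=z^2/2$. Your cautionary remark about the $a=1$ integrand near $r=t$ is unnecessary here since for $x\neq 0$ the exponential factor forces convergence, but otherwise the argument is the same.
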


\begin{proof}
To compute $f$ under this distribution we must evaluate the integral
  \[\frac{1}{B(1/2,\beta)}\int_{t}^{1}(1-r)^{\beta-1}(r-t)^{-a-\tfrac{1}{2}}\exp\left(-\frac{rx^2}{2t(r-t)}\right)dr\]
with $a=0$ corresponding to the integral in the denominator of \eqref{eqn:f} and $a=1$ to the integral in the numerator.
Letting $u=(1-r)/(r-t)$, the integral above reduces to
\begin{equation}\label{eqn:int1}
  \frac{(1-t)^{\beta-a-\tfrac{1}{2}}}{B(\nicefrac{1}{2},\beta)}e^{-\frac{x^2}{2t(1-t)}}\int_{0}^{\infty}u^{\beta-1}(1+u)^{a-\beta-\tfrac{1}{2}}e^{-\frac{x^2 u}{2(1-t)}}du,
\end{equation}
which can be computed explicitly by noting the following integral representation of Tricomi's confluent hypergeometric function $U$ (see \cite[p.~505]{AS})
  \[U(p,q,y)=\frac{1}{\Gamma(p)}\int_{0}^{\infty}u^{p-1}(1+u)^{q-p-1}e^{-yu}du\]
valid for $p,y>0$.
Identifying $p=\beta$, $q=a+1/2$ and $y=x^2/2(1-t)$ we thus have
\begin{equation}\label{eqn:int2}
  \int_{0}^{\infty}u^{\beta-1}(1+u)^{a-\beta-\tfrac{1}{2}}e^{-\frac{x^2 u}{2(1-t)}}du=\Gamma(\beta)U\left(\beta,a+\tfrac{1}{2},\tfrac{x^2}{2(1-t)}\right).
\end{equation}
Using \eqref{eqn:int2} in \eqref{eqn:int1} and substitution of \eqref{eqn:int1} into \eqref{eqn:f} (upon setting $a=0$ for the denominator and $a=1$ for the numerator) yields the desired result.
\end{proof}

%\begin{remark}
%Note that the appearance of confluent hypergeometric functions when considering the beta distribution is not surprising since these functions also appear in distribution's characteristic function.
%\end{remark}

\begin{corollary}\label{cor:betaf}
When $\theta\sim B(1/2,1/2)$ the function $g$ in \eqref{eqn:fbeta} corresponds to
\begin{equation}\label{eqn:garcsin}
  g(z)=\frac{1}{|z|\sqrt{2\pi}}\frac{e^{-\frac{1}{2}z^2}}{1-\Phi\left(|z|\right)}
\end{equation}
where $\Phi$ is the cumulative normal distribution function.
%Note that $\lim_{|y|\to 0}g(y)=\infty$.
\end{corollary}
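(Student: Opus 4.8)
The plan is to specialize Proposition~\ref{prop:betaf} to the case $\beta = 1/2$ and identify the resulting Tricomi functions with elementary expressions. From \eqref{eqn:fbeta}, we need $g(z) = U\!\left(\tfrac12,\tfrac32,\tfrac12 z^2\right)/U\!\left(\tfrac12,\tfrac12,\tfrac12 z^2\right)$, so the task reduces to evaluating $U\!\left(\tfrac12,\tfrac12,y\right)$ and $U\!\left(\tfrac12,\tfrac32,y\right)$ at $y = \tfrac12 z^2$ in closed form. The first of these is the classical one: it is a standard fact (e.g.\ via the integral representation used in the proof of Proposition~\ref{prop:betaf}, or from tables such as \cite[p.~510]{AS}) that $U\!\left(\tfrac12,\tfrac12,y\right) = \sqrt{\pi}\,e^{y}\,\mathrm{erfc}(\sqrt{y})$, which in the notation of the paper is $\sqrt{2\pi}\,e^{y}\bigl(1-\Phi(\sqrt{2y})\bigr)$ after writing $\mathrm{erfc}(w) = 2\bigl(1-\Phi(w\sqrt2)\bigr)$.

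Next I would handle the numerator $U\!\left(\tfrac12,\tfrac32,y\right)$. The cleanest route is the contiguous/derivative relations for $U$: using $\tfrac{d}{dy}U(p,q,y) = -p\,U(p+1,q+1,y)$ together with the relation $U(p,q,y) = U(p,q-1,y) + p\,U(p+1,q,y)$ — or more directly the identity $U(p,q,y) - U(p,q+1,y) = -p\,U(p+1,q+1,y)$ — one can express $U\!\left(\tfrac12,\tfrac32,y\right)$ in terms of $U\!\left(\tfrac12,\tfrac12,y\right)$ and its derivative. Alternatively, and perhaps most transparently, I would simply evaluate the integral $\int_0^\infty u^{-1/2}(1+u)^{0}e^{-yu}\,du$ that \eqref{eqn:int2} gives for the numerator when $\beta = a = 1/2$: this is $\int_0^\infty u^{-1/2}e^{-yu}\,du = \sqrt{\pi/y}$, so $\Gamma(\tfrac12)\,U\!\left(\tfrac12,\tfrac32,y\right) = \sqrt{\pi/y}$, i.e.\ $U\!\left(\tfrac12,\tfrac32,y\right) = 1/\sqrt{y}$. (For the denominator the same computation gives $\int_0^\infty u^{-1/2}(1+u)^{-1}e^{-yu}\,du$, which is the $\mathrm{erfc}$ integral and recovers the formula above.)

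Then I would assemble the ratio. With $y = \tfrac12 z^2$ we get $U\!\left(\tfrac12,\tfrac32,\tfrac12 z^2\right) = 1/\sqrt{\tfrac12 z^2} = \sqrt{2}/|z|$ and $U\!\left(\tfrac12,\tfrac12,\tfrac12 z^2\right) = \sqrt{2\pi}\,e^{z^2/2}\bigl(1-\Phi(|z|)\bigr)$, so
\[
g(z) = \frac{\sqrt{2}/|z|}{\sqrt{2\pi}\,e^{z^2/2}\bigl(1-\Phi(|z|)\bigr)} = \frac{1}{|z|\sqrt{2\pi}}\,\frac{e^{-z^2/2}}{1-\Phi(|z|)},
\]
which is exactly \eqref{eqn:garcsin}. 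One small point to address is that $\tfrac12 z^2 > 0$ fails at $z = 0$, where the integral representation of $U$ is only valid in the limit; I would note that $f(t,x)$ (and hence $g$) is anyway understood for $x \neq 0$, consistent with the earlier observation that $f(t,0) = \infty$, and indeed $g(z) \to \infty$ as $z \to 0$ from the formula.

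I do not expect any serious obstacle here — the only mild subtlety is cross-checking the normalization between $\mathrm{erfc}$, $\Phi$, and the factors of $\sqrt{2\pi}$, and being careful that for $z < 0$ the argument $\tfrac12 z^2$ and the resulting expression depend only on $|z|$ (which is automatic since $x \mapsto f(t,x)$ is even by Proposition~\ref{prop:fdecrease}). The direct evaluation of the two defining integrals at $\beta = 1/2$ sidesteps any need to look up special-function identities, so I would present that as the main line of the argument.
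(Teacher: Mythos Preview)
Your approach is exactly the paper's: specialize \eqref{eqn:fbeta} to $\beta=\tfrac12$ and use the closed forms $U(\tfrac12,\tfrac32,y)=1/\sqrt{y}$ and $U(\tfrac12,\tfrac12,y)=\sqrt{\pi}\,e^{y}\,\mathrm{erfc}(\sqrt{y})$ (the paper simply cites \cite[p.~510]{AS} for both; your direct evaluation of the numerator integral is a nice alternative). There is, however, a constant slip in your write-up: since $\mathrm{erfc}(w)=2\bigl(1-\Phi(w\sqrt{2})\bigr)$, one gets $U(\tfrac12,\tfrac12,y)=2\sqrt{\pi}\,e^{y}\bigl(1-\Phi(\sqrt{2y})\bigr)$, not $\sqrt{2\pi}\,e^{y}\bigl(1-\Phi(\sqrt{2y})\bigr)$; and in the final line $\sqrt{2}/\sqrt{2\pi}=1/\sqrt{\pi}$, not $1/\sqrt{2\pi}$. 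With the correct constant $2\sqrt{\pi}$ in the denominator the ratio is $\dfrac{\sqrt{2}/|z|}{2\sqrt{\pi}\,e^{z^2/2}(1-\Phi(|z|))}=\dfrac{1}{|z|\sqrt{2\pi}}\,\dfrac{e^{-z^2/2}}{1-\Phi(|z|)}$, which is \eqref{eqn:garcsin}. So the argument is right, but as written the two arithmetic errors accidentally land on the correct answer; fix the intermediate constants before submitting.
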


\begin{proof}
The expression can be obtained directly from \eqref{eqn:fbeta} after noting from \cite[][p.~510]{AS} that $U\left(\tfrac{1}{2},\tfrac{3}{2},y\right)=1/\sqrt{y}$ and $U\left(\tfrac{1}{2},\tfrac{1}{2},y\right)=2\sqrt{\pi}e^y\left[1-\Phi(\sqrt{2y})\right]$.
\end{proof}

%\begin{remark}
%It is well known (see \cite{L}) that the last zero of Brownian motion before a fixed time is also arcsin distributed.
%Therefore the Brownian bridge process \eqref{eqn:X} with $\theta\sim B(1/2,1/2)$ can interpreted as a Brownian motion that is absorbed at its last zero over the interval $[0,1]$.
%Hence the solution to problem \eqref{eqn:prob2} also solves the problem of maximizing a Brownian motion absorbed at its last zero before 1.
%\end{remark}

\vspace{4pt}

\begin{figure}[htp!]\centering
  \psfrag{y}{$x$}
  \psfrag{g}{$f$}
  \psfrag{beta}{$\beta\uparrow$}
  \includegraphics[width=0.48\textwidth]{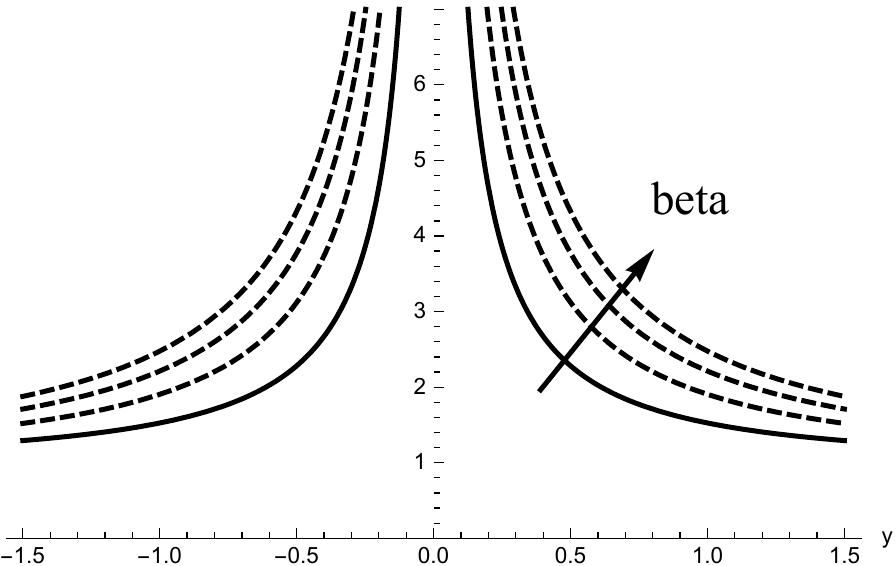}\hspace{2pt}
  \includegraphics[width=0.48\textwidth]{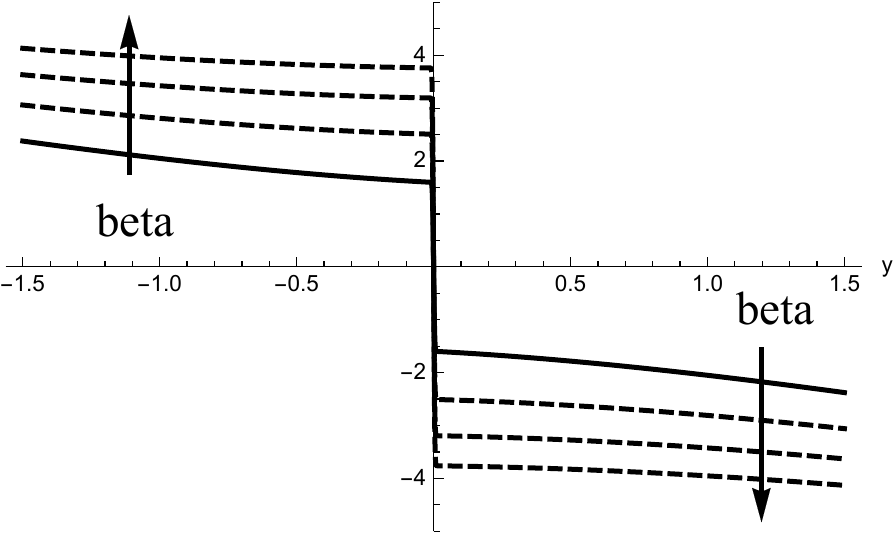}
  \caption{On the left: The function $f(0,x)$ plotted for $\beta=1/2$ (solid line) and for $\beta=\{1,3/2,2\}$ (dashed lines). On the right: The function $-xf(0,x)$ plotted for the same values of $\beta$.}\label{fig:gfunc}
\end{figure}

\vspace{4pt}

2. In Figure \ref{fig:gfunc} we plot the function $f$ given in \eqref{eqn:fbeta} for various values of $\beta$. We also plot the associated drift function $-xf(t,x)$.
We confirm the results of Proposition \ref{prop:fdecrease} that $f$ is indeed larger for values of $x$ closer to zero.
We also observe that $\lim_{x\to 0}f(t,x)=\infty$ which, as noted previously, is consistent with the fact that there is a strictly positive density of the beta distribution at $r=t$ for all $t\in(0,1)$.
Consequently, the drift function is seen, once again, to have a discontinuity at zero.

\vspace{4pt}

3. The optimal stopping problem under a beta distribution is clearly time inhomogeneous, however we are able to exploit an inherent symmetry in the problem to derive a candidate solution.
Specifically, we observe that the problem can be reduced to solving a one dimensional (time-homogeneous) boundary value problem.
Moreover, the optimal stopping strategy in \eqref{eqn:prob2} has a square-root form, i.e.~$\tau^*=\inf\{t\geq 0\,|\,X_{t}>A(\beta)\sqrt{1-t}\}$ for some constant $A(\beta)$.
%The following arguments follow similar lines to \cite{EW}.

Assuming a one-sided stopping region, the general theory of optimal stopping indicates that the value function and optimal stopping boundary should satisfy the following free-boundary problem, where $\partial_1$ denotes differentiation with respect to the first argument.
\begin{eqnarray}\label{eqn:FBP2}
  \left\{\begin{array}{rl}
  \left(\partial_1+\mathbb{L}_X\right)\widehat{V}(t,x)=0, & \textrm{ for } x\in(-\infty,0)\cup(0,b(t)),\\
  \widehat{V}(t,x)=x, & \textrm{ for } x\geq b(t),\\
  \widehat{V}_x(t,x)=1, & \textrm{ at } x=b(t),\\
  \widehat{V}(t,0+)=\widehat{V}(t,0-), & \textrm{ at } x=0,\\
  \widehat{V}_x(t,0+)-\widehat{V}_x(t,0-)=2q(t)\widehat{V}(t,0), & \textrm{ at } x=0,\\
  \widehat{V}(t,x)=0, & \textrm{ at } x=-\infty,\\
 \end{array}\right.
\end{eqnarray}
for $t<1$ and $\widehat{V}(1,0)=0$ (since pinning must happen at or before $t=1$).
Note that the final condition in \eqref{eqn:FBP2} can be justified from the knowledge that $V\leq V^T$ and $\lim_{x\to -\infty}V^T=0$ (for $T=1$).
We also note that the function $q$ defined in \eqref{eqn:kill} can be computed explicitly in this case to be $q(t)=\sqrt{\tfrac{2\pi}{1-t}}\tfrac{1}{B(1/2,\beta)}$.

Given \eqref{eqn:FBP2} and the form of $f$ found in \eqref{eqn:fbeta} we make the ansatz $b(t)=A\sqrt{1-t}$ and further that $\widehat{V}(t,x)=\sqrt{1-t}\,u(z)$ where $z=x/\sqrt{1-t}$.
Problem \eqref{eqn:FBP2} is thus transformed into
\begin{eqnarray}
  \left\{\begin{array}{rl}\label{eqn:FBP3}
  u^{\prime\prime}(z)+z\big(1-2g(z)\big)u^{\prime}(z)-u(z)=0, & \textrm{ for } z\in(-\infty,0)\cup(0,b),\\
  u(z)=z, & \textrm{ for } z\geq A,\\
  u^{\prime}(z)=1, & \textrm{ at } z=A,\\
  u(0+)=u(0-), & \textrm{ at } z=0,\\
  u^{\prime}(0+)-u^{\prime}(0-)=\frac{2\sqrt{2\pi}}{B(1/2,\beta)}u(0), & \textrm{ at } z=0,\\
  u(z)=0, & \textrm{ at } z=-\infty,\\
 \end{array}\right.
\end{eqnarray}
where $g$ is as defined in \eqref{eqn:fbeta}.

\vspace{4pt}

4. We are now able to construct a solution to the above free-boundary problem using the so-called \emph{fundamental solutions} of the ODE in \eqref{eqn:FBP3}.
Firstly, we consider the region $z>0$ and denote the fundamental solutions in this region by $\psi$ and $\varphi$.
It is well known that these functions are positive and that $\psi$ and $\varphi$ are increasing and decreasing, respectively (cf.~\cite{BS}).
Furthermore, since these functions are defined up to an arbitrary multiplicative constant we can set $\psi(0+)=\varphi(0+)=1$ to simplify our expressions.
We therefore have $u(z)=C\psi(z)+D\varphi(z)$ for $z\in(0,A)$, where $C$ and $D$ are constants to be determined via the two boundary conditions at $z=A$ in \eqref{eqn:FBP3}.
Applying these conditions yields
\begin{equation}\label{eqn:CD}
  C=\frac{\varphi(A)-A\varphi^{\prime}(A)}{\varphi(A)\psi^{\prime}(A)-\varphi^{\prime}(A)\psi(A)} \quad \textrm{and} \quad D=\frac{A\psi^{\prime}(A)-\psi(A)}{\varphi(A)\psi^{\prime}(A)-\varphi^{\prime}(A)\psi(A)}.
\end{equation}
Next, the solution for $z<0$ can be constructed in a similar fashion to give $u(z)=C_-\psi_-(z)+D_-\varphi_-(z)$ for $z\in(-\infty,0)$, where $C_-$ and $D_-$ are constants to be determined and $\psi_-$ and $\varphi_-$ are the fundamental solutions for $z<0$.
In fact, it can be seen from the ODE in \eqref{eqn:FBP3} that we must have $\psi_-(z)=\varphi(-z)$ and $\varphi_-(z)=\psi(-z)$ since $g(z)$ is an even function.
To satisfy the boundary condition as $z\to -\infty$ it is clear that $D_- =0$.
In addition, to maintain continuity of $u$ at $z=0$ we must also have $C_- =C+D$ (upon using $\psi(0+)=\varphi(0+)=1$).
To summarize, the solution to \eqref{eqn:FBP3} can be expressed as
\begin{equation}\label{eqn:solbeta}
  u(z)=\left\{\begin{array}{ll}
  (C+D)\varphi(-z), & z\leq 0\\
  C\psi(z)+D\varphi(z), & 0<z<A,\\
  z, & z\geq A,
\end{array}\right.
\end{equation}
where $C$ and $D$ are given by \eqref{eqn:CD}.
Finally, the derivative condition at $z=0$ is used to fix the value of $A$, yielding the following equation
\begin{equation}\label{eqn:Acond}
  C\psi^{\prime}(0+)+D\varphi^{\prime}(0+)+(C+D)\varphi^{\prime}(0+)=\frac{2\sqrt{2\pi}}{B(1/2,\beta)}(C+D).
\end{equation}
Recalling that $C$ and $D$ depend on $A$, finding a value of $A$ that satisfies \eqref{eqn:Acond} gives a solution to \eqref{eqn:FBP3} via \eqref{eqn:solbeta}.
In fact, the following result demonstrates that there is a unique value of $A\geq 0$ satisfying \eqref{eqn:Acond} and hence there is a unique solution to the free-boundary problem in \eqref{eqn:FBP3} with $A\geq 0$.

\begin{proposition}\label{lem:Aunique}
For a given $\beta>0$, there is a unique $A\geq 0$ satisfying equation \eqref{eqn:Acond}.
\end{proposition}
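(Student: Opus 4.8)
The plan is to collapse \eqref{eqn:Acond} into a single scalar equation of the form $R(A)=\delta$ and then to prove that $R$ is strictly monotone.

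\textbf{Reformulation.} Write $k:=2\sqrt{2\pi}/B(1/2,\beta)>0$. The first observation is that \eqref{eqn:Acond} is a \emph{linear homogeneous} relation in $(C,D)$ whose coefficients do not depend on $A$; rearranged it reads $C\bigl(\psi'(0+)+\varphi'(0+)-k\bigr)+D\bigl(2\varphi'(0+)-k\bigr)=0$. Since $\varphi$ is decreasing, $\varphi'(0+)\le 0$, so $2\varphi'(0+)-k<0$, and the relation is equivalent to fixing
\[
\frac{D}{C}=\delta:=-\,\frac{\psi'(0+)+\varphi'(0+)-k}{2\varphi'(0+)-k},
\]
a finite constant depending only on $\beta$. (Here $\psi'(0+),\varphi'(0+)$ are genuine finite numbers because the drift of $X$ has a finite limit as $|x|\downarrow 0$, so the coefficients of the ODE in \eqref{eqn:FBP3} extend continuously to $z=0+$ and $\psi,\varphi$ are $C^{2}$ up to that endpoint.) Next, the Wronskian $W(A):=\varphi(A)\psi'(A)-\varphi'(A)\psi(A)$ solves $W'=-z(1-2g(z))\,W$ with $W(0)=\psi'(0+)-\varphi'(0+)>0$ (the inequality because $\psi,\varphi$ are independent with $\psi'(0+)\ge 0\ge\varphi'(0+)$), hence $W(A)>0$ for all $A\ge 0$; in particular $C(A)=(\varphi(A)-A\varphi'(A))/W(A)>0$. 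Therefore \eqref{eqn:Acond} holds precisely when $R(A)=\delta$, where
\[
R(A):=\frac{D(A)}{C(A)}=\frac{A\psi'(A)-\psi(A)}{\varphi(A)-A\varphi'(A)},
\]
which is $C^{1}$ on $[0,\infty)$ since its denominator is at least $\varphi(A)>0$.

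\textbf{Monotonicity.} The crux is to show $R'>0$ on $(0,\infty)$. Put $h(A):=A\psi'(A)-\psi(A)$ and $\ell(A):=A\varphi'(A)-\varphi(A)<0$, so that $R=-h/\ell$ and $R'=(h\ell'-h'\ell)/\ell^{2}$. Using $h'=A\psi''$, $\ell'=A\varphi''$ and substituting the ODE relations $\psi''=\psi-z(1-2g(z))\psi'$ and $\varphi''=\varphi-z(1-2g(z))\varphi'$, the mixed combinations reduce to $\psi'\varphi''-\psi''\varphi'=W(A)$ and $\psi''\varphi-\psi\varphi''=-A(1-2g(A))W(A)$, whence
\[
h\ell'-h'\ell=A^{2}W(A)+A\bigl(-A(1-2g(A))W(A)\bigr)=2A^{2}g(A)\,W(A).
\]
Since $g>0$ on $(0,\infty)$ and $W>0$, this is strictly positive, so $R'>0$ there.

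\textbf{Range of $R$ and conclusion.} At $A=0$ the normalisation $\psi(0+)=\varphi(0+)=1$ gives $R(0)=-\psi(0)/\varphi(0)=-1$, while a one-line rearrangement gives $\delta+1=(\varphi'(0+)-\psi'(0+))/(2\varphi'(0+)-k)=-W(0)/(2\varphi'(0+)-k)>0$, so $\delta>-1=R(0)$. As $A\to\infty$ the coefficient $z(1-2g(z))$ tends to $-z$ (because $g(z)\to 1$), so the increasing fundamental solution grows like $e^{z^{2}/2}$ and the decreasing one $\varphi$ decays; inserting these asymptotics into the formula for $R$ yields $R(A)\to+\infty$. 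Since $R$ is continuous and strictly increasing on $(0,\infty)$ with $R(0^{+})=-1<\delta$ and $R(+\infty)=+\infty$, the equation $R(A)=\delta$ has exactly one root in $(0,\infty)$; and $A=0$ is not a root since $R(0)=-1\neq\delta$. This gives the unique $A\ge 0$ satisfying \eqref{eqn:Acond}.

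\textbf{Main obstacle.} I expect the delicate point to be the asymptotic analysis of $\psi$ and $\varphi$ as $z\to\infty$ needed to conclude $R(A)\to+\infty$: although the leading-order behaviour $\psi\sim e^{z^{2}/2}$, $\varphi\sim\mathrm{const}/z$ is classical, justifying the passage to the limit in $R$ rigorously requires a Liouville--Green (or comparison) estimate near the natural boundary at $+\infty$ (alternatively, one need only show $R$ eventually exceeds the fixed constant $\delta$). The one genuine computation, the identity for $h\ell'-h'\ell$, is short but is where the ODE must be used carefully; the remaining ingredients — positivity of $C$ and $W$, non-vanishing of the coefficient of $D$ in \eqref{eqn:Acond}, and the values at $A=0$ — are routine.
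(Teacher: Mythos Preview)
Your proof is correct and is essentially the paper's own argument with a sign flip: the paper writes $p(z)=\dfrac{z\psi'(z)-\psi(z)}{z\varphi'(z)-\varphi(z)}$ and the constant $K=\dfrac{\psi'(0+)+\varphi'(0+)-\alpha}{2\varphi'(0+)-\alpha}$, so that your $R=-p$ and $\delta=-K$; it then computes $p'(z)=\dfrac{2z^{2}g(z)\bigl(\varphi'\psi-\varphi\psi'\bigr)}{(z\varphi'-\varphi)^{2}}\le 0$, exactly your identity $h\ell'-h'\ell=2A^{2}g(A)W(A)$ rewritten. The only cosmetic difference is at infinity: the paper dispenses with explicit asymptotics by invoking that $+\infty$ is a natural boundary for $X$ (hence $\psi'(z)\to\infty$, cf.\ \cite[p.~19]{BS}) to conclude $p(z)\to-\infty$, which is a cleaner route than the Liouville--Green heuristic you flag as the main obstacle.
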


\begin{proof}
Using \eqref{eqn:CD} in \eqref{eqn:Acond} and rearranging gives
\begin{equation}\label{eqn:hcond}
  p(A)=\frac{\psi^{\prime}(0+)+\varphi^{\prime}(0+)-\alpha}{2\varphi^{\prime}(0+)-\alpha}=:K \,\,\textrm{ where }\,\,p(z):=\frac{z\psi^{\prime}(z)-\psi(z)}{z\varphi^{\prime}(z)-\varphi(z)}
\end{equation}
and where we have defined the constant $\alpha:=2\sqrt{2\pi}/B(1/2,\beta)$.
It is clear that $p(0+)=1$ and direct differentiation of $p(z)$, upon using the ODE in \eqref{eqn:FBP3}, gives
\begin{equation*}
  p^{\prime}(z)=\frac{2z^2g(z)\big(\varphi^{\prime}(z)\psi(z)-\varphi(z)\psi^{\prime}(z)\big)}{\big(z\varphi^{\prime}(z)-\varphi(z)\big)^2} \leq 0,
\end{equation*}
since $g>0$ and $\psi$ and $\varphi$ are increasing and decreasing, respectively.
In addition, it can be shown that $\lim_{z\to\infty}p(z)=-\infty$ since $+\infty$ is a natural boundary for $X$ (and hence $\lim_{z\to\infty}\psi^{\prime}(z)=\infty$, see \cite[][p.~19]{BS}).
These properties of $p$ imply that a unique positive solution to \eqref{eqn:hcond} exists iff $K\leq 1$.
From its definition in \eqref{eqn:hcond}, this can be seen to be true upon noting that $\alpha\geq 0$ and that $\psi$ and $\varphi$ are increasing and decreasing, respectively.
\end{proof}

Since the function $g$ defined \eqref{eqn:fbeta} is rather complicated, finding a closed form expression for the functions $\psi$ and $\varphi$ appears unlikely.
However, standard numerical (finite-difference) methods can easily be used to construct these functions and determine the solution to \eqref{eqn:FBP3}.

In Figure \ref{fig:betaAVal} we plot the dependence of the constant $A$, and the corresponding value function $\widehat{V}$, on the parameter $\beta$.
We observe that $\lim_{\beta\downarrow 0}A(\beta)=B$, the solution in the known pinning case and given by the solution to \eqref{eqn:classicB}.
Further, we have that $\beta\mapsto A(\beta)$ is decreasing and hence the stopper will stop sooner for a larger $\beta$.
The corresponding value of stopping is thus also lower for larger $\beta$.
This dependence appears intuitive upon recalling that the unconditional expected pinning time is also decreasing in $\beta$.

\begin{figure}[htp!]\centering
  \psfrag{A}{$A$}
  \psfrag{V}{$\widehat{V}$}
  \psfrag{beta}{$\beta$}
  \psfrag{x}{$x$}
  \includegraphics[width=0.48\textwidth]{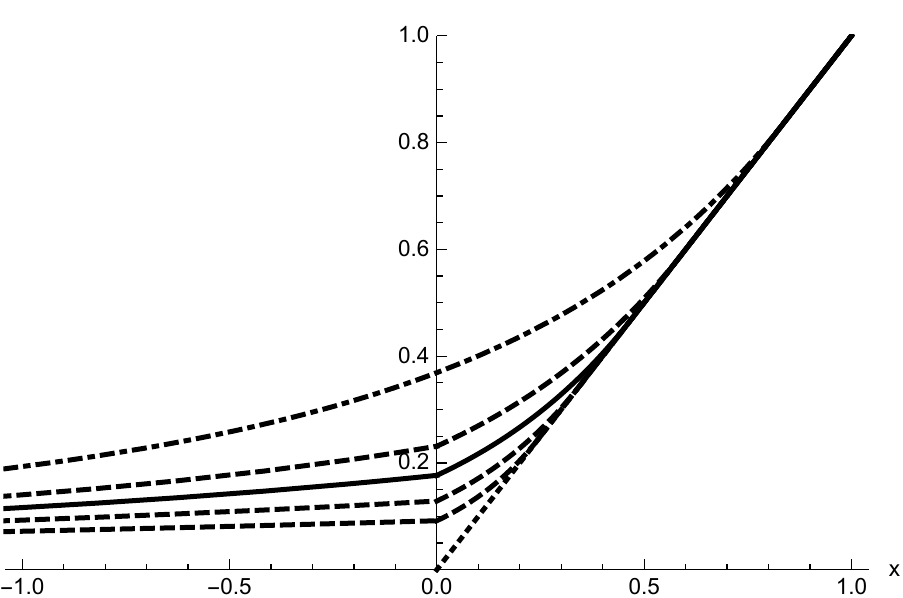}\hspace{2pt}
  \includegraphics[width=0.48\textwidth]{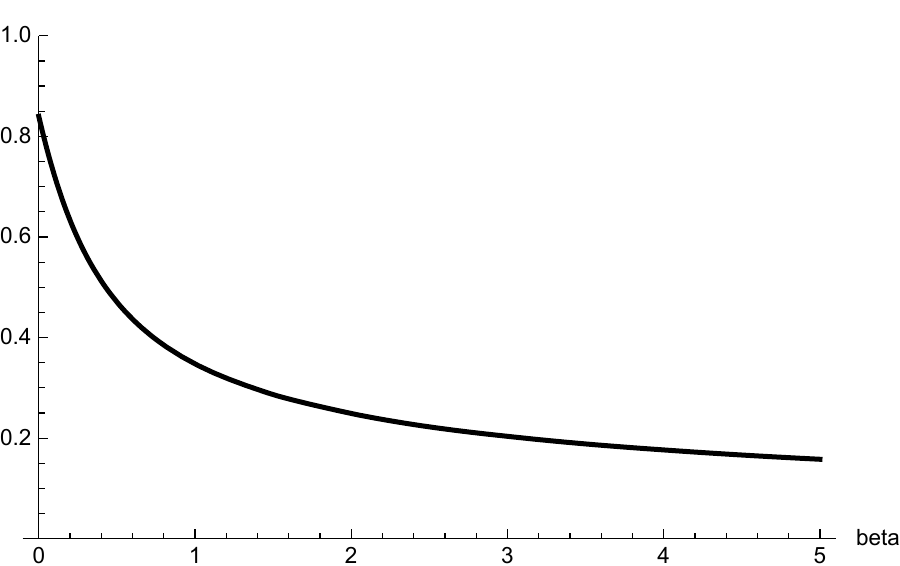}
  \caption{On the left: The value function $\widehat{V}$, determined numerically via \eqref{eqn:FBP3}, for the case of $\theta\sim B(1/2,\beta)$ for $\beta=1/2$ (solid line) and for $\beta=\{1/4,1,2\}$ (dashed lines). Lower lines correspond to higher values of $\beta$. The value function when the pinning time is known to equal $1$---given in \eqref{eqn:certpinV}---is also plotted for comparison (dot-dashed line). On the right: The unique constant $A$ for various values of $\beta$.} \label{fig:betaAVal}
\end{figure}

\vspace{4pt}

5. We conclude with our verification theorem, preceded by a necessary lemma.

\begin{lemma}\label{lem:compensator}
%If the distribution of $\theta$ admits a continuous density function with respect to Lebesgue, then
For any uniformly continuous function $h$ we have
\begin{equation}\label{eqn:Acomp}
  \E[h(\theta)\mathbf{1}_{\{\theta\leq t+\tau\}}\,|\,\mathcal{F}_t^X]=\E\int_0^{\tau\wedge(\theta-t)}h(t+u)q(t+u)d\ell_u^0(X)
\end{equation}
for any stopping time $\tau$.
\end{lemma}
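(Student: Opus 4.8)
The plan is to derive \eqref{eqn:Acomp} from the compensator representation \eqref{eqn:compK} established in \cite{BBE2}, which says that $\mathbf{1}_{\{\theta\leq s\}}-\int_0^{s\wedge\theta}q(u)d\ell_u^0(X)$ is an $\mathcal{F}^X$-martingale. The statement \eqref{eqn:Acomp} is essentially a weighted version of this, with $h(\theta)$ inserted; the natural route is to first prove it for indicator (or step) functions $h$ and then extend to uniformly continuous $h$ by approximation.

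First I would treat the case $h=\mathbf{1}_{(a,b]}$. For such $h$, $h(\theta)\mathbf{1}_{\{\theta\leq t+\tau\}}=\mathbf{1}_{\{\theta\leq (t+\tau)\wedge b\}}-\mathbf{1}_{\{\theta\leq (t+\tau)\wedge a\}}$ (assuming $a<b$, and one handles $t\le a$ or $t\ge b$ as trivial sub-cases), so the left-hand side becomes a difference of two quantities of the form $\E[\mathbf{1}_{\{\theta\leq (t+\tau)\wedge c\}}\,|\,\mathcal F_t^X]$. Each of these is computed exactly as in the chain of equalities leading to \eqref{eqn:probpin}: write $\mathbf{1}_{\{\theta\le t+\sigma\}}=K_{t+\sigma}$ plus a martingale increment for a bounded stopping time $\sigma$, apply optional sampling, and use $K_{t+\sigma}-K_t=\int_{t\wedge\theta}^{(t+\sigma)\wedge\theta}q(s)\,d\ell_s^0(X)=\int_0^{\sigma\wedge(\theta-t)}q(t+u)\,d\ell_u^0(X)$. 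Taking $\sigma=\tau\wedge(c-t)$ and subtracting the two contributions (with $c=b$ and $c=a$) collapses the difference of integrals into $\E\int_0^{\tau\wedge(\theta-t)}\mathbf{1}_{(a,b]}(t+u)\,q(t+u)\,d\ell_u^0(X)$, because on $\{t+u\in(a,b]\}$ the integrand is the $q$ coming from the $b$-term, and outside it the two cancel. By linearity this gives \eqref{eqn:Acomp} for all step functions $h$.

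To pass to a uniformly continuous $h$, I would take step functions $h_n$ with $\|h_n-h\|_\infty\to 0$ on $[0,T]$ (possible since $\mathrm{supp}\,\mu\subseteq[0,T]$, so only values of the argument in $[0,T]$ matter); then the left-hand side of \eqref{eqn:Acomp} converges since $|h_n(\theta)-h(\theta)|\le\|h_n-h\|_\infty$, and the right-hand side converges by the bound
\[
\Big|\E\int_0^{\tau\wedge(\theta-t)}\big(h_n-h\big)(t+u)\,q(t+u)\,d\ell_u^0(X)\Big|\le\|h_n-h\|_\infty\,\E\int_0^{\tau\wedge(\theta-t)}q(t+u)\,d\ell_u^0(X),
\]
and the last expectation is finite since it equals $\P(t<\theta\le t+\tau\,|\,\mathcal F_t^X)$-type quantity bounded by $1$ after taking expectations — indeed $\E\int_0^{\theta-t}q(t+u)\,d\ell_u^0(X)=\E[K_\infty-K_t]\le 1$. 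Taking $n\to\infty$ yields \eqref{eqn:Acomp} in general.

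The main obstacle is the bookkeeping in the step-function case: one must be careful that the identity $\mathbf{1}_{\{\theta\le t+\sigma\}}=K_{t+\sigma}+(\text{martingale increment from }t)$ is applied with a \emph{bounded} stopping time $\sigma=\tau\wedge(c-t)$ so that optional sampling is legitimate, and that the local-time integrals over the overlapping ranges $[0,\sigma\wedge(\theta-t)]$ for $c=a$ and $c=b$ really do combine into a single integral against $\mathbf{1}_{(a,b]}(t+u)$ rather than leaving boundary terms — this is where the continuity of $\ell^0(X)$ and of $q$ (granted by the continuous density assumption on $\mu$) is used. Everything else is routine approximation and linearity.
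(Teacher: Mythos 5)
Your proof is correct, but it takes a genuinely different route from the paper's. The paper proves the representation $A_t=\int_0^{t\wedge\theta}h(s)q(s)\,d\ell_s^0(X)$ for the compensator of $H_t:=h(\theta)\mathbf{1}_{\{\theta\le t\}}$ by \emph{re-running} the construction of Theorem 3.2 in [BBE2], replacing the approximating process there by
\[
A_t^\epsilon:=\frac{1}{\epsilon}\int_0^t\Bigl(h(\theta)\mathbf{1}_{\{s<\theta\}}-\E\bigl[h(\theta)\mathbf{1}_{\{s+\epsilon<\theta\}}\,\big|\,\mathcal{F}_s^X\bigr]\Bigr)ds,
\]
and observing that the estimates carry over verbatim because $h$ is uniformly continuous (just as the density of $\theta$ is). You instead \emph{take the conclusion} of that theorem --- the compensator identity \eqref{eqn:compK} for the unweighted indicator --- as your starting point, and bootstrap the weighted version from it: first for $h=\mathbf{1}_{(a,b]}$ via optional sampling at the bounded stopping times $\tau\wedge(c-t)^+$ and a telescoping of local-time integrals (which, as you note, works cleanly because $\ell^0$ is continuous, so endpoints carry no mass), then for finite step functions by linearity, then for general uniformly continuous $h$ by the uniform approximation argument with the bound $\E\int_0^{\theta-t}q(t+u)\,d\ell_u^0(X)\le 1$. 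Your argument is more self-contained and elementary --- it needs only \eqref{eqn:compK} and the bounded optional sampling theorem, not the machinery of [BBE2] --- whereas the paper's version is shorter on the page but defers the real work to the external reference. One small point worth making explicit in your write-up: the optional sampling step requires the stopping times to be bounded, which holds here because the horizon $T$ is finite (the beta case has $T=1$); if one wanted the lemma for unbounded $\tau$ one would need an extra localization.
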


\begin{proof}
Define the process $H_t:=h(\theta)\mathbf{1}_{\{\theta\leq t\}}$ for some uniformly continuous function $h$.
The aim is to show that the compensator of $H$ admits the representation
\begin{equation}\label{eqn:Hcomp}
  A_t=\int_0^{t\wedge\theta}h(s)q(s)d\ell_s^0(X),
\end{equation}
where $q$ is as defined in \eqref{eqn:kill}.
The proof of this representation follows analogous arguments to the proof of Theorem 3.2 in \cite{BBE2} and so full details are omitted in the interests of brevity.
The key difference however is that the process $A^\epsilon=(A^\epsilon_t)_{t\geq 0}$ defined as
\begin{equation}\label{eqn:hcomp}
  A_t^\epsilon:=\frac{1}{\epsilon}\int_0^t\left(h(\theta)\mathbf{1}_{\{s<\theta\}}-\E\left[h(\theta)\mathbf{1}_{\{s+\epsilon<\theta\}}\,|\,\mathcal{F}_s^X\right]\right)ds
\end{equation}
for every $\epsilon>0$ is used in place of $K^h$ in Eq.~(11) of \cite{BBE2}.
All arguments of the proof follow through without modification upon noting that the function $h$, like the density of $\theta$, is uniformly continuous.
Finally, using the compensator in \eqref{eqn:Hcomp} we identify that $\E[h(\theta)\mathbf{1}_{\{\theta\leq t+\tau\}}\,|\,\mathcal{F}_t^X]=\E[A_{t+\tau}-A_t]$, yielding the desired expression in \eqref{eqn:Acomp}.
\end{proof}

\begin{theorem}\label{thm:verifbeta} \emph{(Verification).}
The value function $\widehat{V}(t,x)=\sqrt{1-t}\,u\big(\tfrac{x}{\sqrt{1-t}}\big)$, where $u$ is the unique solution to \eqref{eqn:FBP3}, coincides with the function $V(t,x)$ defined in \eqref{eqn:prob2} with $\theta\sim B(1/2,\beta)$.
Moreover, the stopping time $\tau^*=\inf\{s\geq 0\,|\,X_{t+s}^{t,x}\geq b(t+s)\}$ is optimal, where $b(t)=A\sqrt{1-t}$ with $A\geq 0$ and uniquely determined via \eqref{eqn:Acond}.
\end{theorem}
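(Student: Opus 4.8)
The plan is to mirror the verification argument already carried out in the gamma case (Theorem~\ref{thm:verifgamma}), but now accounting for the time inhomogeneity and the time-dependent killing rate $q(t)$. First I would fix $(t,x)$ and an arbitrary admissible stopping time $\tau\in[0,1-t]$, and decompose $\E[\widehat V(t+\tau\wedge\theta,X_{t+\tau\wedge\theta})]$ into the contribution on $\{\tau<\theta\}$ and on $\{\theta\leq\tau\}$. On the latter event $X_{\theta}=0$, so that term equals $\E[\widehat V(\theta,0)\mathbf 1_{\{\theta\leq t+\tau\}}\,|\,\mathcal F_t^X]$; applying Lemma~\ref{lem:compensator} with $h(s)=\widehat V(s,0)$ (which is uniformly continuous on $[0,1]$ since $\widehat V(t,0)=\sqrt{1-t}\,u(0)$) rewrites this as $\E\int_0^{\tau\wedge(\theta-t)}\widehat V(t+u,0)q(t+u)\,d\ell_u^0(X)$. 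Combined with $\widehat V(t+\tau,X_{t+\tau})\geq X_{t+\tau}$ from \eqref{eqn:solbeta}, this yields the analogue of \eqref{eqn:ineqgam}:
\begin{equation*}
  \E\big[\widehat V(t+\tau\wedge\theta,X_{t+\tau\wedge\theta})\big]\geq\E\big[X_{t+\tau}\mathbf 1_{\{t+\tau<\theta\}}\big]+\E\int_0^{\tau\wedge(\theta-t)}\widehat V(t+u,0)q(t+u)\,d\ell_u^0(X).
\end{equation*}

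Next I would apply the local time--space formula (cf.\ \cite{P}) to $\widehat V(t+s,X_{t+s})$ up to $\tau\wedge(\theta-t)$, noting that $\widehat V_x$ is continuous across the boundary $x=b(t+s)$ (by the smooth-fit condition in \eqref{eqn:FBP2}) but jumps across $x=0$. The curvature term splits as $(\partial_1+\mathbb L_X)\widehat V=0$ on the continuation region $\{0<x<b(t+s)\}\cup\{x<0\}$ and equals $-xf(t+s,x)\cdot\mathrm{sgn}$-type contributions on $\{x>b(t+s)\}$, where since $\widehat V(t+s,x)=x$ there the term $(\partial_1+\mathbb L_X)x=-xf(t+s,x)\leq 0$ because $x>0$ and $f\geq 0$; call the resulting nonpositive additive term $\Lambda$. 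The local-time integrand at $x=0$ contributes $\tfrac12(\widehat V_x(t+s,0+)-\widehat V_x(t+s,0-))\,d\ell_s^0(X)=q(t+s)\widehat V(t+s,0)\,d\ell_s^0(X)$ by the fifth line of \eqref{eqn:FBP2}. The stochastic term $M_s=\int_0^s \widehat V_x(t+u,X_{t+u})\mathbf 1_{\{X_{t+u}\neq 0,\,b(t+u)\}}\,d\bar B_u$ is a martingale because $\widehat V_x$ is bounded on the relevant range (it equals $1$ for $z\geq A$ and is bounded on $z\leq A$ since $\psi,\varphi$ and their derivatives are bounded near $0$ and $\psi$ is increasing, $\varphi$ decreasing). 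Putting these together gives exactly the local-time term that cancels against the one produced by Lemma~\ref{lem:compensator}, so that
\begin{equation*}
  \E\big[X_{t+\tau}\mathbf 1_{\{t+\tau<\theta\}}\big]\leq\widehat V(t,x)+\E[\Lambda_{\tau\wedge(\theta-t)}]+\E[M_{\tau\wedge(\theta-t)}]\leq\widehat V(t,x),
\end{equation*}
and taking the supremum over $\tau$ gives $V(t,x)\leq\widehat V(t,x)$.

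For the reverse inequality I would take $\tau=\tau^*=\inf\{s\geq 0:X_{t+s}\geq b(t+s)\}$. Then $X_{(t+\tau^*)}=b(t+\tau^*)$ on $\{\tau^*<\theta\}$ so $\widehat V(t+\tau^*,X_{t+\tau^*})=X_{t+\tau^*}$, the process $\Lambda$ vanishes (its integrand is supported on $\{X>b\}$, never visited before $\tau^*$), and the inequality $\widehat V\geq X$ used above is an equality at the stopping point; hence both inequalities collapse and $V(t,x)=\E[X_{t+\tau^*}\mathbf 1_{\{t+\tau^*<\theta\}}]=\widehat V(t,x)$. One also needs to check $\tau^*$ is a.s.\ finite and $\leq 1-t$, which follows because $b(t)=A\sqrt{1-t}\to 0$ as $t\uparrow 1$ while $X$ either hits the boundary or is pinned at $0$ by time $1$, so $\tau^*\wedge(\theta-t)$ is well defined. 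The main obstacle I anticipate is the rigorous justification of the local time--space formula in this setting: the drift $-xf(t,x)$ is discontinuous at $x=0$ and unbounded in $t$ near $t=1$ (since $q(t)=\sqrt{2\pi/(1-t)}/B(1/2,\beta)\to\infty$), so one must argue that $\widehat V$ has the requisite $C^{1,2}$-away-from-$\{x=0\}\cup\{x=b(t)\}$ regularity with locally bounded second derivatives and that the various expectations of local-time integrals are finite — this is where care is needed, possibly by first localising via a sequence of stopping times $\sigma_m\uparrow(\theta-t)\wedge(1-t)$ and passing to the limit using $V\leq V^1$ (the known-pinning bound) to control the payoff near $t=1$.
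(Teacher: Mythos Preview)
Your proposal is correct and follows essentially the same route as the paper: decompose $\E[\widehat V((t+\tau)\wedge\theta,X_{(t+\tau)\wedge\theta})]$ over $\{t+\tau<\theta\}$ and $\{\theta\leq t+\tau\}$, invoke Lemma~\ref{lem:compensator} with $h(s)=\widehat V(s,0)$ for the latter, apply the local time--space formula so that the local-time term from the jump in $\widehat V_x$ at zero cancels the compensator term, and then specialise to $\tau^*$ for the reverse inequality. The paper handles the martingale property of $M$ simply by noting that $\widehat V_x$ is bounded (rather than via an explicit localisation near $t=1$), but otherwise your outline matches the paper's proof step for step.
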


\begin{proof}
Denoting $X=X^{t,x}$ for ease of notation, we first note that for an arbitrary stopping time $\tau$,
\begin{align}
  \E\big[\widehat{V}\big((t+\tau)\wedge\theta,X_{(t+\tau)\wedge\theta}\big)\big] &= \E\big[\widehat{V}(t+\tau,X_{t+\tau})\mathbf{1}_{\{t+\tau<\theta\}}\big] + \E\big[\widehat{V}(\theta,X_{\theta})\mathbf{1}_{\{\theta\leq t+\tau\}}\big] \nonumber \\
  &= \E\big[\widehat{V}(t+\tau,X_{t+\tau})\mathbf{1}_{\{t+\tau<\theta\}}\big] + \E\big[\widehat{V}(\theta,0)\mathbf{1}_{\{\theta\leq t+\tau\}}\big] \nonumber \\
  &\geq \E\big[X_{t+\tau}\mathbf{1}_{\{t+\tau<\theta\}}\big] + \E\big[\widehat{V}(\theta,0)\mathbf{1}_{\{\theta\leq t+\tau\}}\big] \nonumber \\
  &= \E\big[X_{t+\tau}\mathbf{1}_{\{t+\tau<\theta\}}\big] + \E\int_0^{\tau\wedge(\theta-t)}\widehat{V}(t+u,0)q(t+u)d\ell_u^0(X), \label{eqn:ineqbet}
\end{align}
where the last equality above is due to \eqref{eqn:Acomp} upon setting $h(s)=\widehat{V}(s,0)$ and noting that $\widehat{V}(s,0)=\sqrt{1-s}\,u(0)$ is uniformly continuous.
Secondly, an application of the local time-space formula (cf.~\cite{P}), upon noting that $\widehat{V}_t$ and $\widehat{V}_x$ are continuous across $x=b(t)$ but not across $x=0$, yields
\begin{align}
  &\E\big[\widehat{V}\big((t+\tau)\wedge\theta,X_{(t+\tau)\wedge\theta}\big)\big]=\widehat{V}(t,x)+\E\int_0^{\tau\wedge(\theta-t)}\left(\partial_1+\mathbb{L}_X\right)\widehat{V}(t+u,X_{t+u})\mathbf{1}_{\{X_{t+u}\neq 0\textrm{ or }b(t+u)\}}du\nonumber \\
  &\quad\quad\quad\quad+\E\left[M_{\tau\wedge(\theta-t)}\right]+\E\int_0^{\tau\wedge(\theta-t)}\frac{1}{2}\big(\widehat{V}_x(t+u,X_{t+u}+)-\widehat{V}_x(t+u,X_{t+u}-)\big)\mathbf{1}_{\{X_{t+u}=0\}}d\ell_u^0(X) \nonumber \\
  &\quad\quad\quad= \widehat{V}(t,x)-\E\int_0^{\tau\wedge(\theta-t)}X_{t+u}f(t+u,X_{t+u})\mathbf{1}_{\{X_{t+u}>b(t+u)\}}du \nonumber \\
  &\quad\quad\quad\quad+\E\left[M_{\tau\wedge(\theta-t)}\right]+\E\int_0^{\tau\wedge(\theta-t)}q(t+u)\widehat{V}(t+u,X_{t+u})\mathbf{1}_{\{X_{t+u}=0\}}d\ell_u^0(X) \nonumber \\
  &\quad\quad\quad=: \widehat{V}(t,x)+\E\left[\Lambda_{\tau\wedge(\theta-t)}\right]+\E\left[M_{\tau\wedge(\theta-t)}\right]+\E\int_0^{\tau\wedge(\theta-t)}q(t+u)\widehat{V}(t+u,0)d\ell_u^0(X) \nonumber
\end{align}
where $M_s:=\int_0^{s}\widehat{V}_x(t+u,X_{t+u})\mathbf{1}_{\{X_{t+u}\neq 0\textrm{ or }b(t+u)\}}d\bar{B}_{u}$ is a local martingale and $\Lambda$ is a decreasing process since $f\geq 0$ and $A\geq 0$ (and hence $b\geq 0$).
Thirdly, combining \eqref{eqn:ineqbet} with the above equality we see that
\begin{align}
  \E\left[X_{t+\tau}\mathbf{1}_{\{t+\tau<\theta\}}\right] &\leq \E\big[\widehat{V}(t+\tau,X_{t+\tau})\mathbf{1}_{\{t+\tau<\theta\}}\big] \nonumber \\
  &=\widehat{V}(t,x)+\E\left[\Lambda_{\tau\wedge(\theta-t)}\right]+\E\left[M_{\tau\wedge(\theta-t)}\right] \nonumber \\
  &\leq \widehat{V}(t,x)\label{eqn:verifbet}
\end{align}
where the last inequality follows from the optional sampling theorem upon noting that $\widehat{V}_x$ is bounded and hence $M$ is a true martingale.
Consequently, taking the supremum over all admissible stopping times in \eqref{eqn:verifbet} yields
  \[V(t,x)=\sup_{0\leq\tau\leq T-t}\E[X_{t+\tau}\mathbf{1}_{\{t+\tau<\theta\}}]\leq\widehat{V}(t,x).\]
To establish the reverse inequality note that, since $\widehat{V}(t+\tau^*,X_{t+\tau^*})=X_{t+\tau^*}$ and $\E[\Lambda_{\tau^*\wedge(\theta-t)}]=0$, both inequalities in \eqref{eqn:verifbet} become equalities for $\tau=\tau^*$.
Thus
  \[V(t,x)\geq\E[X_{t+\tau^*}\mathbf{1}_{\{t+\tau^*<\theta\}}]=\widehat{V}(t,x),\]
completing the proof.
\end{proof}

\end{document}